\newtheorem{Theorem}{Theorem}[section]
\newtheorem{Lemma}{Lemma}[section]
\newtheorem{Proposition}{Proposition}[section]
\theoremstyle{definition}
\newtheorem{Definition}{Definition}[section]
\theoremstyle{remark}
\newtheorem{Remark}{Remark}[section]
\numberwithin{equation}{section}
\renewcommand{\u}{{\bf u}}
\newcommand{\R}{{\mathbb R}}
\newcommand{\Dv}{{\rm div}}
\newcommand{\x}{{\bf x}}
\def\f{\frac}
\def\hf1{^\f{1}{1-\xi^2}}
\def\be{\begin{equation}}
\def\en{\end{equation}}
\def\bs{\begin{split}}
\def\es{\end{split}}
\author{Cheng Yu}
\address{Department of Mathematics,  The University of Texas,
                           Austin, Texas 78712.}
\email{yucheng@math.utexas.edu}
\title%[Global weak solutions to N-S-V equations]
\keywords{weak solution, Boltzmann type equation, kinetic approach, energy conservation.}
\subjclass[2000]{}
\date{\today}
\begin{document}
\begin{abstract}
In this paper, we study the initial value problem of  a  Boltzmann type equation with a nonlinear
degenerate damping. We prove the existence of global weak solutions with large initial data,  in three
dimensional space. We rely on a variant version of the Gronwall inequality and $L^p$ regularity of average
 velocities to derive the compactness of solutions to a suitable approximation. This allows us to recover a weak
 solution by passing to the limits. After the existence result, we also prove energy conservation for the weak solution under some certain condition.
\end{abstract}

\maketitle

\section{Introduction}

Kinetic approach plays a critical role in many variant fields of mathematical physics and applied sciences, from micro- and nano-physics to continuum mechanics, and from social science to biological science. It is an important tool
in the modeling and simulation of phenomena across length and time scales, from the atomistic to the continuum. Thus,
it has diverse applications in gas dynamics, engineering, medicine,
geophysics and astrophysics, and has attracted numerous mathematical interests in modeling and analysis, see the references \cite{AOB,BDM,BDGM,GM,H,LV,MV,OR,ODA,YU,W}.
Naturally, one of the fundamental problems is to study the existence of weak solution and its energy conservation.
\vskip0.3cm

 In this paper we consider the following nonlinear partial differential equation \cite{AOB,BDM,LV,ODA}:
\begin{equation}
\label{kinetic equation-1}
f_t+\xi\cdot\nabla_x f+\Dv_{\xi}(fF)=\mathfrak{Q}(f),
\end{equation}
where $f(t,x,\xi)$ is the density function for individuals at time $t\in \R^+$, physical position of an individual $x\in\R^3$, with the velocity  $\xi\in\R^3$.
Thus, $f(t,x,\xi)\,dx\,d\xi$ is the number density of individuals at position between $x$ and $x+dx$ and with velocity between $\xi$ and $\xi+d\xi$. The function
$$\int_{\R^3}f(t,x,\xi)\,d\xi$$
denotes the number density of individuals at the physical position $x$ at time $t$.
The evolution of density distribution $f(t,x,\xi)$ is described by equation \eqref{kinetic equation-1}.
Note that, $F$ is the external force acting on the individuals, and the operator $\mathfrak{Q}(f)$ denotes the rate of change of $f$ due to reaction, random choice of velocity, etc.
We assume that the external force $F\equiv 0$ in this paper.
\vskip0.3cm

Let us to give some backgrounds on the operator $\mathfrak{Q}(f)$. Assume that two different processes contribute to $\mathfrak{Q}(f)$, that is $$\mathfrak{Q}=\mathfrak{Q}_1+\mathfrak{Q}_2,$$
where $\mathfrak{Q}_1$ denotes a birth-death process, $\mathfrak{Q}_2$ stands for a process that generates random velocity changes.
The birth-death process can be described as follows
$$\mathfrak{Q}_1(f)=-\mu(\mathfrak{n})f,$$
where $\mu=\mu(\mathfrak{n})\geq 0$ is the birth-death rate which only depends on the number density of individuals $\mathfrak{n}(t,x)$. For more details on this operator, we refer the reader to \cite{GM,ODA}.
The number density of individuals is given by
$$\mathfrak{n}(t,x)=\int_{\R^3}f(t,x,\xi)\,d\xi.$$ It is also called the particles density (or zeroth moment) in the kinetic theory.
%Choosing $\mu(\mathfrak{n})=\mathfrak{n},$ the operator reads $$\mathfrak{Q}_1(f)=-\left(\int_{\R^3}f\,d\xi\right)\,f.$$
The stochastic process is modeled by the following operator(see \cite{ODA}),
\begin{equation}
\label{operator}
\mathfrak{Q}_2(f)=-\lambda f(t,x,\xi)+\lambda\int_{\R^3} T(\xi,\xi')f(t,x,\xi')\,d\xi',
\end{equation}
where $\lambda>0$ is the break-up frequency. The kernel $T(\xi,\xi')$ is the probability of a change with respect to velocity from $\xi'$ to $\xi$, and $\xi$ is the velocity of individuals before the collision while $\xi'$ is the velocity immediately after the collision. Thus, equation \eqref{kinetic equation-1} becomes to the following
\begin{equation*}
f_t+\xi\cdot\nabla_x f=-\mu(\mathfrak{n}) f-\lambda f+\lambda\int_{\R^3}T(\xi,\xi')f(t,x,\,\xi')\,d\xi^{'}.
\end{equation*}

\vskip0.3cm

Furthermore, from the conservation of kinetic energy it follows,
$$|\xi|^2=|\xi'|^2,$$
from which, we deduce
\begin{equation}
 \label{same speed}|\xi|=|\xi'|.
 \end{equation}
Thus, of particular interest in this paper is the case in which the speed does not change with reorientation.
 Given that a reorientation occurs,
  the probability function $T(\xi,\xi')$ is a non-negative function and after normalization we may have \begin{equation}
 \label{probability function}
 \int_{\R^3} T(\xi,\xi')\,d\xi=1.\end{equation}
 Next, following the work of \cite{LV}, we assume that $T(\xi,\xi')$ satisfies a self-similarity property, namely
 \begin{equation}
 \label{self-similarity}
 T(\xi,\xi')=H(|\xi'|)T(\frac{\xi}{|\xi'|},\frac{\xi'}{|\xi'|}),\quad\text{ for some function } H(\cdot).
 \end{equation}
 %In addition, $T$ should be invariant under rotations of the pair $(\xi,\xi')$.
In this current paper, we assume that  \eqref{same speed}, \eqref{probability function} and \eqref{self-similarity} hold.

 \vskip0.3cm

The objective of our current work is to investigate the issue of existence
of global weak solutions  to
\begin{equation}
\label{kinetic equation}
f_t+\xi\cdot\nabla_x f=-\mu(\mathfrak{n}) f-\lambda f+\lambda\int_{\R^3}T(\xi,\xi')f(t,x,\,\xi')\,d\xi^{'}
\end{equation} with the following initial data:
\begin{equation}
\label{initial data}
f(0,x,\xi)=f^0(x,\xi),
\end{equation}
where $\mathfrak{n}(t,x)=\int_{\R^3}f\,d\xi,$  and $(t,x,\xi)\in \R^{+}\times \R^3\times\R^3.$ The first goal is to prove the global existence of weak solutions with large initial data. After the existence result, we shall prove the energy conservation for such a weak solution. Note that, the energy conservation is a fundamental problem in the physical theory and the mathematical study of kinetic equation.

\vskip0.3cm
To the best of our knowledge, the first existence result related to \eqref{kinetic equation} goes back to
the work of
Leger-Vasseur \cite{LV} where they established the existence of solutions to the coupled system by Navier-Stokes and \eqref{kinetic equation} with $\mu(\mathfrak{n})=0.$  In \cite{LV} they constructed
the weak solutions to the associated kinetic equation for $(x,\xi)$
in bounded domains. Roughly speaking, they
build a sequence of nonnegative solutions $\{f_n\}_{n=1}^{\infty}$ to a suitable approximation. Because the energy inequality does not hold at the approximation level, the uniform estimates cannot be derived directly. They proved this sequence is an increasing sequence in $n$ by induction,  and applied the Monotone Convergence Theorem to $\{f_n\}_{n=1}^{\infty}$ to derive its strong convergence. Note, the energy inequality does not hold at the approximated iteration. Combined the increasing of $\{f_n\}_{n=1}^{\infty}$, the bad terms in energy inequality can be bounded by means of $L^p$ estimates in bounded domain.  We point out that the Monotone Convergence Theorem is a crucial tool to handle the kinetic equation in \cite{LV}.
Our main goal of this paper is to  extend the existence result of kinetic equation in \cite{LV} to \eqref{kinetic equation} with $\mu(\mathfrak{n})$ in the setting $\R^+\times\R^3\times\R^3$.
The nonlinear term $\mu(\mathfrak{n})f$
leads to the loss of the monotonicity for the sequences of solutions to the suitable approximation, the arguments of  Leger-Vasseur \cite{LV} cannot apply for the convergence.
For this reason we need to develop new argument  to guarantee the convergence of solutions to approximation.

\vskip0.3cm

Note that, the smooth solution of \eqref{kinetic equation}-\eqref{initial data}, satisfies the energy equality. In particular,  we have energy inequality
\begin{equation}
\label{energy inequality for definition}
\begin{split}
\int_{\R^3\times\R^3}(1+|\xi|^m)f\,d\xi\,d x&+ \int_0^T\int_{\R^3\times\R^3}\mu(\mathfrak{n})(1+|\xi|^m)f\,d\xi\,dx\,dt
\\
&\leq \int_{\R^3\times\R^3}(1+|\xi|^m)f^0\,d\xi\,d x,
\end{split}
\end{equation}
for any $T>0$. Energy inequality \eqref{energy inequality for definition} will derive in Section 2. Thus, it is natural to ask the initial data satisfies the following ones
\begin{equation}
\label{condition on initial data}
 \int_{\R^3\times\R^3}(1+|\xi|^m)f^0\,d\xi\,d x<+\infty.
\end{equation}
In fact, we will prove the stronger version of \eqref{energy inequality for definition} in Section 5, for a weak solution under some certain condition,
\begin{equation*}
\begin{split}
\int_{\R^3\times\R^3}(1+|\xi|^m)f\,d\xi\,d x&+ \int_0^T\int_{\R^3\times\R^3}\mu(\mathfrak{n})(1+|\xi|^m)f\,d\xi\,dx\,dt
\\
&= \int_{\R^3\times\R^3}(1+|\xi|^m)f^0\,d\xi\,d x,
\end{split}
\end{equation*}
for any $t\in[0,T].$ This implies that such a weak solution preserves the energy conservation.

 \vskip0.3cm
 Based on energy inequality \eqref{energy inequality for definition} and related estimates, we give the definition of weak solutions in the following sense.
\begin{Definition} \label{D1} The function
 $f$ is a global weak solution to the initial value problem \eqref{kinetic equation}-\eqref{initial data} if,  for any $T>0$, the following properties hold,\\
\begin{itemize}%\addtolength{\itemsep}{-0.2\baselineskip}
\item The function $f(t,x,\xi)$ has the following regularities
  \begin{equation*}
\begin{cases}
&f(t,x,\xi)\geq 0\quad\text{ for any } (t,x,\xi)\in (0,T)\times\R^3\times\R^3;
\\&f\in C(0,T;L^1(\R^3\times\R^3)\cap L^{\infty}(0,T; L^1\cap L^{\infty}(\R^3\times\R^3)),
\\&|\xi|^mf \in L^{\infty}(0,T;L^1(\R^3\times\R^3))\quad\text{ for any } m\in [0, m_0] \text{ for some } m_0\geq 3;
\end{cases}
\end{equation*}
\item The initial value problem \eqref{kinetic equation}-\eqref{initial data} holds in the sense of distribution, that is, for any test function $\varphi\in C^{\infty}(\R^3\times\R^3\times[0,T])$, the following weak formulation holds
\begin{equation}
\label{weak formulation in definition}
\begin{split}
%\\& \text{For any test function } \varphi\in C^{\infty}(\R^3\times\R^3\times[0,T]), \;\;\text{ we have }\\&
&\int_{\R^3\times\R^3}f^0\varphi(0,x,\xi)\,d\xi\,d x+\int_0^T\int_{\R^3\times\R^3}\left(f\varphi_t+\xi f\cdot\nabla_x\varphi\right)\,d\xi\,d x\,dt+
\\&\int_0^T\int_{\R^3\times\R^3}\mu(\mathfrak{n})f\varphi\,d\xi\,d x\,dt=-\lambda\int_0^T\int_{\R^3\times\R^3}f\varphi\,d\xi\,d x\,dt
\\&+\lambda\int_0^T\int_{\R^3\times\R^3\times\R^3}T(\xi,\xi')f(t,x,\xi')\varphi\,d\xi'\,d\xi\,d x\,dt.
\end{split}
\end{equation}
\item Energy inequality holds for almost every where $t>0$:
\begin{equation*}
\begin{split}
\int_{\R^3\times\R^3}(1+|\xi|^m)f\,d\xi\,d x\leq \int_{\R^3\times\R^3}(1+|\xi|^m)f^0\,d\xi\,d x.
\end{split}
\end{equation*}
 \end{itemize}
 \end{Definition}

As our main result, we proved the following theorem on the existence of global weak solutions to the initial value problem \eqref{kinetic equation}-\eqref{initial data}.
\begin{Theorem}
\label{main result}
If $f^0\in L^{\infty}(\R^3\times\R^3)\cap L^{1}(\R^3\times\R^3)$, and $f^0(1+|\xi|^{m})\in L^1(\R^3\times\R^3)$ for any $ m\in[0,m_0]$ for some $m_0\geq 3$; the probability function $T(\xi,\xi')$ satisfies \eqref{probability function} and \eqref{self-similarity};
the function $\mu(\cdot)$ is a Lipschitz continues function;
 then there exists a global weak solution to  the initial value problem \eqref{kinetic equation}-\eqref{initial data}.
\end{Theorem}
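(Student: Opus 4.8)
The plan is to adapt the Picard-type iteration used by Leger--Vasseur \cite{LV}, but to replace their Monotone Convergence argument --- which is unavailable here since the term $\mu(\mathfrak{n})f$ destroys the monotonicity of the iterates --- by a Gronwall-type estimate on successive differences, whose constants are made uniform by $L^p$ velocity averaging. Fix $T>0$, let $f^0$ be the given datum (mollified if convenient), and, given a nonnegative $f^n\in L^\infty(0,T;L^1\cap L^\infty(\R^3\times\R^3))$ with density $\mathfrak{n}^n=\int_{\R^3}f^n\,d\xi$, define $f^{n+1}$ to be the unique nonnegative solution of the \emph{linear} problem
\[
\del_t f^{n+1}+\xi\cd\na_x f^{n+1}+\bigl(\mu(\mathfrak{n}^n)+\l\bigr)f^{n+1}=\l\!\int_{\R^3}T(\xi,\xi')f^{n+1}(t,x,\xi')\,d\xi',\qquad f^{n+1}|_{t=0}=f^0,
\]
with $\mu$ truncated at a height $k$ (removed at the end by a further limit) so that the zeroth-order coefficient is bounded. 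For each fixed $n$ this is solved by a Neumann-series/contraction argument in $C(0,T;L^1\cap L^\infty)$ based on the method of characteristics, treating the gain operator $\T g:=\int_{\R^3}T(\xi,\xi')g(\xi')\,d\xi'$ as a bounded perturbation; nonnegativity persists because $\mu(\mathfrak{n}^n)\geq0$ and $\T$ is positivity preserving. Throughout, two structural identities for $\T$ are used: $\int_{\R^3}\T g\,d\xi=\int_{\R^3}g\,d\xi$ by \eqref{probability function}, and, since $|\xi|=|\xi'|$ on the support of $T$ by \eqref{same speed}, also $\int_{\R^3}|\xi|^m\,\T g\,d\xi=\int_{\R^3}|\xi|^m g\,d\xi$ for all $m\geq0$.

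Next I would establish the uniform-in-$n$ bounds. Integrating the equation in $\xi$ and using the mass identity for $\T$ gives $\del_t\mathfrak{n}^{n+1}+\na_x\cd\!\int_{\R^3}\xi f^{n+1}\,d\xi=-\mu(\mathfrak{n}^n)\mathfrak{n}^{n+1}\leq0$, so $\|f^{n+1}(t)\|_{L^1}\leq\|f^0\|_{L^1}$; multiplying by $(1+|\xi|^m)$ and using the moment identity for $\T$ gives, for $m\in[0,m_0]$,
\[
\int_{\R^3\times\R^3}(1+|\xi|^m)f^{n+1}(t)\,d\xi\,dx+\int_0^t\!\!\int_{\R^3\times\R^3}\mu(\mathfrak{n}^n)(1+|\xi|^m)f^{n+1}\,d\xi\,dx\,ds=\int_{\R^3\times\R^3}(1+|\xi|^m)f^0\,d\xi\,dx,
\]
the approximate version of \eqref{energy inequality for definition}; and, along characteristics, since $\mu,\l\geq0$ and $\T$ maps $L^\infty_\xi$ boundedly into itself, $\|f^{n+1}(t)\|_{L^\infty}\leq e^{Ct}\|f^0\|_{L^\infty}$ with $C$ independent of $n$. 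Hence $\{f^n\}$ is bounded in $L^\infty(0,T;L^1\cap L^\infty)$ with uniformly bounded moments of order $\leq m_0$. Since each $f^n$ solves a transport equation whose right-hand side is bounded in $L^\infty(0,T;L^1\cap L^\infty)$, the $L^p$ velocity-averaging lemma gives $\int f^n\psi(\xi)\,d\xi$ bounded in $L^p(0,T;W^{s,p}_{\mathrm{loc}})$ for some $s>0$, $p>1$; together with the moment bound (to cut off large $|\xi|$) this makes $\{\mathfrak{n}^n\}$ relatively compact, hence bounded, in $L^p_{\mathrm{loc}}((0,T)\times\R^3)$.

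The crucial step is the convergence of $\{f^n\}$ itself. With $g^n:=f^{n+1}-f^n$, subtracting two consecutive equations yields
\[
\del_t g^n+\xi\cd\na_x g^n+\bigl(\mu(\mathfrak{n}^n)+\l\bigr)g^n-\l\,\T g^n=-\bigl(\mu(\mathfrak{n}^n)-\mu(\mathfrak{n}^{n-1})\bigr)f^n,
\]
and the $L^1$ estimate (test with $\mathrm{sgn}(g^n)$: the $\l$-terms cancel because $\int_{\R^3}\T|g^n|\,d\xi=\int_{\R^3}|g^n|\,d\xi$, while $\mu(\mathfrak{n}^n)|g^n|\geq0$ is discarded) gives $\frac{d}{dt}\|g^n(t)\|_{L^1}\leq C\|g^{n-1}(t)\|_{L^1}$, where the Lipschitz bound on $\mu$, the elementary inequality $\|\mathfrak{n}^n-\mathfrak{n}^{n-1}\|_{L^1_x}\leq\|g^{n-1}\|_{L^1}$, and the uniform control of the densities $\mathfrak{n}^n$ from the previous step make $C$ independent of $n$ and $t$. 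Since $g^n(0)=0$, iterating this variant of the Gronwall inequality yields $\sup_{[0,T]}\|g^n\|_{L^1}\leq A(CT)^n/n!$, hence $\sum_n\sup_{[0,T]}\|g^n\|_{L^1}<\infty$ and $f^n\to f$ strongly in $C(0,T;L^1(\R^3\times\R^3))$ for some $f\geq0$. I expect the main obstacle to be exactly the uniform closure of this inequality: the constant multiplying $\|g^{n-1}\|_{L^1}$ involves $\mathfrak{n}^n$, which the hypotheses on $f^0$ do not control in $L^\infty_x$, so the $L^p$-averaging regularity of the densities (and the truncation of $\mu$, removed by a final limit) is genuinely needed here.

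It remains to pass to the limit. From $f^n\to f$ in $C(0,T;L^1)$, $f^n\overset{*}{\rightharpoonup}f$ in $L^\infty$, and $\mathfrak{n}^n\to\mathfrak{n}$ in $L^p_{\mathrm{loc}}$ (so, along a subsequence, a.e.), the Lipschitz continuity of $\mu$ gives $\mu(\mathfrak{n}^n)\to\mu(\mathfrak{n})$ a.e.\ with a uniform $L^p_{\mathrm{loc}}$ bound, whence $\mu(\mathfrak{n}^n)f^{n+1}\to\mu(\mathfrak{n})f$ in $\mathcal D'$; the transport terms pass by weak convergence and the gain term by Fubini together with the $L^1$ convergence and $\int_{\R^3}T(\xi,\xi')\,d\xi=1$. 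Thus $f$ satisfies \eqref{weak formulation in definition}. The regularity required in Definition \ref{D1}, including time-continuity and the moment bounds, follows from the uniform estimates and weak lower semicontinuity (Fatou), and the energy inequality follows by letting $n\to\infty$ in the approximate identity above. Since $T>0$ was arbitrary and all bounds are $T$-independent apart from the exponential $L^\infty$ factor, $f$ is a global weak solution, which completes the argument.
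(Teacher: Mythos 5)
Your construction---mollified data, a Picard iteration that freezes the density at the previous level, solvability of each linear step by characteristics, the exact mass and moment identities for the gain operator, uniform $L^1\cap L^\infty$ and moment bounds, and velocity averaging for the compactness of the densities---tracks the paper's Sections 3--4 closely; that the paper lags the gain term to level $k-1$ (explicit Duhamel formula) and packages the uniform bounds through the sequential Gronwall inequality of Lemma \ref{Gronwall inequality} rather than through exact identities is a cosmetic difference, and your implicit version of the energy identity is if anything cleaner. The genuine divergence, and the gap, lies in how you pass to the limit in the nonlinearity $\mu(\mathfrak{n})f$.

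You claim strong convergence of the whole sequence in $C(0,T;L^1)$ from the successive-difference estimate $\frac{d}{dt}\|g^n\|_{L^1}\leq C\|g^{n-1}\|_{L^1}$ with $C$ independent of $n$. After testing with $\mathrm{sgn}(g^n)$ the source term is
\begin{equation*}
\int_{\R^3}\bigl|\mu(\mathfrak{n}^n)-\mu(\mathfrak{n}^{n-1})\bigr|\,\mathfrak{n}^n\,dx\;\leq\; L\int_{\R^3}\bigl|\mathfrak{n}^n-\mathfrak{n}^{n-1}\bigr|\,\mathfrak{n}^n\,dx,
\end{equation*}
and to dominate this by $C\|\mathfrak{n}^n-\mathfrak{n}^{n-1}\|_{L^1_x}\leq C\|g^{n-1}\|_{L^1}$ you need $\sup_n\|\mathfrak{n}^n\|_{L^\infty_{t,x}}<\infty$. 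The hypotheses only provide moments up to order $m_0\geq 3$, which through the interpolation of Lemma \ref{Lemma of n-j} give $\mathfrak{n}^n$ bounded in $L^{\infty}(0,T;L^{(3+m_0)/3}(\R^3))$---an integrability bound, not a pointwise one; $f^0\in L^\infty_{x,\xi}$ does not yield $\mathfrak{n}^n\in L^\infty_x$ since the $\xi$-integration is over all of $\R^3$. You flag exactly this obstacle, but neither proposed remedy closes it: velocity averaging gives relative compactness of $\{\mathfrak{n}^n\}$ in $L^p_{\mathrm{loc}}$, not a uniform $L^\infty_x$ bound, and truncating $\mu$ controls $\mu$ itself, not the factor $\mathfrak{n}^n$ multiplying the Lipschitz difference. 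If one instead interpolates ($\|\mathfrak{n}^n-\mathfrak{n}^{n-1}\|_{L^{r'}}\lesssim\|g^{n-1}\|_{L^1}^{\theta}$ with $\theta<1$), the resulting recursion $\frac{d}{dt}\|g^n\|\leq C\|g^{n-1}\|^{\theta}$ does not force $\|g^n\|\to 0$. The paper avoids the difficulty altogether: it never proves strong convergence of $f_k$, only weak convergence of $f_k$ in $L^p$ combined with strong $L^r_{\mathrm{loc}}$ convergence of the velocity averages $\mathfrak{n}_k$ and $j_k$ via DiPerna--Lions--Meyer, which suffices to pass to the limit in $\mu(\mathfrak{n}_{k-1})f_k$ as the product of a strongly and a weakly convergent factor. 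Rewriting your limit passage along those lines, and dropping the contraction claim, would repair the argument; as written, the central convergence step does not follow from the stated hypotheses.
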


\vskip0.3cm

We first follow the approximation introduced in \cite{LV}, which is equation \eqref{regularized equation}. As the same in \cite{LV}, the energy inequality does not hold at the approximation level. However, the sequence of solutions to \eqref{regularized equation} is no longer increasing in $n$ due to a nonlinear term $\mu(\mathfrak{n}_k)f_k$. It is necessary to develop new arguments to facilitate the compactness of solutions to approximation. The crucial step here is to show the strong convergence of the zeroth moment and the first moment in $L^p$ space. By means of the $L^p$ regularity of average velocities, the strong convergence can be obtained
if they are bounded uniformly  in $L^p$ space.
In particular,   we shall rely on  a variant of the Gronwall inequality (see Lemma \ref{Gronwall inequality}) to show that, for any $0\leq t\leq T,$ there exists a constant $K>0$, such that $$\int_{\R^3}|\xi|^m f_k\,d\xi\leq K e^{KT},$$
   for all $k>0.$
 It allows us to control the zeroth moment and first moment $$\int_{\R^3} \xi f_{k}\,d\xi\quad\text{ and }\int_{\R^3} f_k\,d\xi$$
in $L^p$ space for some $p>1$.  In order to assure the convergence $\mu(\mathfrak{n}_{k-1})\mathfrak{n}_k$,  we need to prove the strong convergence of $\mathfrak{n}_k$ in $L^p$ space. This can be done by the $L^p$ regularity of average velocities. Thus
\begin{equation*}
\label{convergence of Q1}
\int_{\R^3}\mu(\mathfrak{n}_{k-1})f_k\,d\xi\to \int_{\R^3}\mu(\mathfrak{n})f\,d\xi
\end{equation*} in $L^{\infty}(0,T;L^1_{loc}(\R^3))$
as $k\to\infty$. With a suitable approximation and the weak stability, the existence of weak solutions can be done.
\vskip0.3cm

Formally, the classical solutions of the physical PDEs always keep the energy conservation, but not a weak solution, at least for fluid equations. Naturally, the question is how badly behaved the solution can be
 in order that a weak solution still can preserve the energy. In fluid equations, there are a lot of study related to this question. For example, Serrin \cite{Serrin} showed if a weak solution $\u$ to incompressible Navier-Stokes equations with additional condition,
then a weak solution $\u$ holds the energy equality for any $0\leq t\leq T.$   Shinbrot \cite{Shinbrot} proved the same conclusion under some different conditions. For the Euler's equation, E-Constantin-Titi \cite{CET} proved the energy equality for a weak solution, which is the answer to the first part of Onsager's conjecture \cite{O}.  As before that we mentioned, the energy conservation is a fundamental problem in physics and related mathematical analysis. Thus, we are interested in studying this question for a weak solution to \eqref{kinetic equation}.
Our second main result is on the energy conservation for a weak solution constructed in Theorem \ref{main result}.

\begin{Theorem}
\label{energy conservation}
Let $f$ be a weak solution constructed in Theorem \ref{main result}, in addition,
\begin{equation}
\label{addition condition on m order}\int_{\R^3\times\R^3}|\xi|^{3+m}f\,d\xi\,d x\leq K<\infty,
\end{equation}
for any $t\in[0,T],$
 then it preserves the energy conservation, that is, $f$ satisfies
\begin{equation}
\label{energy conservation}
\begin{split}
\int_{\R^3\times\R^3}(1+|\xi|^m)f\,d\xi\,d x&+ \int_0^t\int_{\R^3\times\R^3}\mu(\mathfrak{n})(1+|\xi|^m)f\,d\xi\,dx\,dt
\\
&= \int_{\R^3\times\R^3}(1+|\xi|^m)f^0\,d\xi\,d x,
\end{split}
\end{equation}
for any $t\in[0,T].$
\end{Theorem}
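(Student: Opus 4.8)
The plan is to test the weak formulation \eqref{weak formulation in definition} against an approximation of the weight $1+|\xi|^m$ and pass to the limit. Formally, multiplying \eqref{kinetic equation} by $1+|\xi|^m$ and integrating in $(x,\xi)$ makes the transport term disappear (it is an exact $x$-divergence) and cancels the two collision terms against each other: since $T(\xi,\xi')$ is carried by $\{|\xi|=|\xi'|\}$ by \eqref{same speed} and $\int_{\R^3}T(\xi,\xi')\,d\xi=1$ by \eqref{probability function}, one has $\int_{\R^3}(1+|\xi|^m)T(\xi,\xi')\,d\xi=1+|\xi'|^m$, so after swapping the order of integration the gain term equals the loss term. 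What survives is the damping term, and integrating in time gives \eqref{energy conservation}. The whole task is to make this rigorous for the weak solution of Theorem \ref{main result}.

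For the rigorous argument I would fix a smooth, even $\theta:\R\to[0,1]$ with $\theta\equiv1$ on $[-1,1]$, $\supp\theta\subset(-2,2)$, nonincreasing on $[0,\infty)$, and set $\phi_R(\xi)=(1+|\xi|^m)\,\theta(|\xi|/R)$, $\psi_S(x)=\theta(|x|/S)$; then $\phi_R\psi_S\in C_c^\infty(\R^3\times\R^3)$ is bounded by $1+R^m$, and $\phi_R\uparrow 1+|\xi|^m$, $\psi_S\uparrow1$ as $R,S\to\infty$, so $\phi_R\psi_S$ is monotone in each parameter. I would also take $\chi_\varepsilon\in C^\infty(\R)$ with $\chi_\varepsilon\equiv1$ on $[0,t-\varepsilon]$, $\chi_\varepsilon\equiv0$ on $[t,T]$, $\chi_\varepsilon'\le0$. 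Using $\varphi(s,x,\xi)=\phi_R(\xi)\psi_S(x)\chi_\varepsilon(s)$ in \eqref{weak formulation in definition} and sending $\varepsilon\to0$ — which is legitimate because $s\mapsto\int_{\R^3\times\R^3}f(s)\phi_R\psi_S\,d\xi\,dx$ is continuous, by $f\in C(0,T;L^1)$ and boundedness of $\phi_R\psi_S$ — yields an identity at time $t$ whose only nontrivial terms are $\int f(t)\phi_R\psi_S$, $\int f^0\phi_R\psi_S$, the damping term weighted by $\phi_R\psi_S$, the transport remainder $\int_0^t\int\xi f\cdot\nabla_x\psi_S\,\phi_R$, and the two collision terms.

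Next I would let $R\to\infty$. Monotone convergence handles the mass terms and the damping term; for the collision terms, Tonelli rewrites the gain term as $\lambda\int f(\xi')\big[\int\phi_R(\xi)T(\xi,\xi')\,d\xi\big]\,d\xi'$, and the identity $\int(1+|\xi|^m)T(\xi,\xi')\,d\xi=1+|\xi'|^m$ recalled above shows it converges to $\lambda\int_0^t\int f(\xi')(1+|\xi'|^m)\psi_S$, which is exactly the limit of the loss term; all these are finite since $(1+|\xi|^m)f\in L^\infty(0,T;L^1)$. The transport remainder converges, by dominated convergence, to $\int_0^t\int\xi f\cdot\nabla_x\psi_S\,(1+|\xi|^m)$, which is finite provided $|\xi|^{m+1}f\in L^1(\R^3\times\R^3)$ — and this is furnished by \eqref{addition condition on m order} (interpolate $|\xi|^{m+3}f$ against $f$ in $L^1$). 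Finally, sending $S\to\infty$: monotone convergence gives $\int f(t)(1+|\xi|^m)$, $\int f^0(1+|\xi|^m)$ and $\int_0^t\int\mu(\mathfrak{n})(1+|\xi|^m)f$, while the transport remainder is bounded by $(C/S)\int_0^t\int|\xi|^{m+1}f$ and tends to $0$. Collecting the limits produces \eqref{energy conservation}.

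I expect the only real work to be the integrability bookkeeping for the damping term $\int_0^t\int\mu(\mathfrak{n})(1+|\xi|^m)f$ — the collision cancellation is clean — and this is exactly where \eqref{addition condition on m order} enters. Since $\mu$ is Lipschitz, $\mu(\mathfrak{n})\le\mu(0)+L\,\mathfrak{n}$ with $\mathfrak{n}(x)=\int f\,d\xi$; the elementary bound $\int|\xi|^jf\,d\xi\lesssim\|f\|_\infty^{(3+m-j)/(6+m)}\big(\int|\xi|^{3+m}f\,d\xi\big)^{(3+j)/(6+m)}$ for $j=0$ and $j=m$, integrated in $x$ and combined with $\mathfrak{n}\in L^1_x$ (from $f\in L^\infty(0,T;L^1)$), puts $\mathfrak{n}$ into $L^{(6+m)/3}_x\cap L^1_x$ and $\int(1+|\xi|^m)f\,d\xi$ into $L^{(6+m)/(m+3)}_x$, uniformly on $[0,T]$; as these exponents are conjugate, Hölder makes the damping term finite, so the monotone convergence steps yield a genuine equality. (Alternatively, finiteness of the damping term can be read off a posteriori, since every other term in the resulting identity is finite and the transport remainder vanishes.)
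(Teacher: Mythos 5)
Your proof is correct, and it reaches the identity by a genuinely different regularization than the paper's. The paper mollifies the weak solution in the space variable ($\overline{g}=g*\eta_{\varepsilon}$), uses Lemma \ref{smooth lemma} to commute the convolution with the $\xi$-integration so that the two collision terms cancel exactly, arrives at the mollified identity \eqref{reduce form for energy conservation}, and then removes the mollification using \eqref{addition condition on m order} together with the interpolation Lemmas \ref{Lemma of n-j} and \ref{Lemma of m order}, which place $\mathfrak{n}$ in $L^{(6+m)/3}_x$ and $\int_{\R^3}|\xi|^m f\,d\xi$ in $L^{(6+m)/(3+m)}_x$ --- exactly the conjugate-exponent H\"older bookkeeping you carry out for the damping term. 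You instead keep the weak formulation \eqref{weak formulation in definition} as is and regularize the weight $1+|\xi|^m$ by compactly supported cutoffs in $\xi$, $x$ and $t$; the collision cancellation is then exact at every truncation level because $\phi_R(\xi)=\phi_R(\xi')$ on the support of $T$ by \eqref{same speed} and \eqref{probability function}. Your route makes explicit two points the paper passes over in silence: the disappearance of the transport term (your remainder $\int_0^t\int \xi f\cdot\nabla_x\psi_S\,\phi_R$, killed by the $O(1/S)$ bound once $|\xi|^{m+1}f\in L^1$ is extracted from \eqref{addition condition on m order} by interpolation against $f\in L^1$), and the legitimacy of integrating the unbounded weight against each term of the equation (your monotone-convergence passage $R\to\infty$, with the finiteness of the damping integral read off a posteriori). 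The paper's route buys that every term is smooth in $x$ before the weight is introduced, so only the single limit $\varepsilon\to 0$ must be taken. Both arguments ultimately rest on the same two facts: the exact cancellation of $\mathfrak{Q}_2$ against any radial weight, and the integrability of $\mu(\mathfrak{n})(1+|\xi|^m)f$ supplied by \eqref{addition condition on m order}.
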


\begin{Remark}
The condition \eqref{addition condition on m order} can ensure the first moment $\mathfrak{n}$ is bounded in $L^{\infty}(0,T;L^{\frac{6+m}{3}}(\R^3))$ and ensure the $m$ moments $$\int_{\R^3}|\xi|^mf\,d\xi$$ is bounded in
$L^{\infty}(0,T;L^{\frac{6+m}{3+m}}(\R^3))$. Thus, we find that $$\int_{\R^3}\mu(\mathfrak{n})f(1+|\xi|^m)\,d\xi$$ is bounded in $L^{\infty}(0,T;L^1(\R^3)).$ This is a crucial estimate in showing \eqref{energy conservation}.
\end{Remark}

The rest of the paper is organized as follows. In Section 2, we derive the energy inequality and a crucial estimate on the probability function $T(\xi,\xi').$ In Section 3, we construct a sequence of smooth solutions to an approximation with some uniformly estimates. In Section 4, we show the weak stability of the sequence of solutions and study the limiting process to obtain the existence result of global weak solutions. In Section 5, we will show the energy equality \eqref{energy conservation}.

%%%%%%%%%%%%%%%%%%%%%

\bigskip\bigskip

\section{A priori estimates}
In this section, we aim at deriving a priori estimates of  \eqref{kinetic equation}-\eqref{initial data}, which will help us to get the weak stability of the solutions.
Firstly, we derive the energy inequality for any smooth solutions of \eqref{kinetic equation}-\eqref{initial data}, which is the estimate in the following Lemma.
\begin{Lemma}
For any smooth solutions of \eqref{kinetic equation}-\eqref{initial data}, they satisfy the following energy inequality
\begin{equation*}
%\label{energy law}
\begin{split}
\int_{\R^3\times\R^3}(1+|\xi|^m)f\,d\xi\,d x&+ \int_0^T\int_{\R^3\times\R^3}\mu(\mathfrak{n})(1+|\xi|^m)f\,d\xi\,dx\,dt
\\
&\leq \int_{\R^3\times\R^3}(1+|\xi|^m)f^0\,d\xi\,d x,
\end{split}
\end{equation*}
for any $T>0$.
\end{Lemma}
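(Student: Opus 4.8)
The plan is to use $(1+|\xi|^m)$ as a multiplier in \eqref{kinetic equation} and integrate in $(x,\xi)$ over $\R^3_x\times\R^3_\xi$. The first ingredient is a purely computational identity for the collision kernel, namely
\begin{equation*}
\int_{\R^3}(1+|\xi|^m)\,T(\xi,\xi')\,d\xi=1+|\xi'|^m .
\end{equation*}
To see this, I would plug the self-similarity hypothesis \eqref{self-similarity} into the left-hand side and make the change of variables $\xi=|\xi'|\eta$ (so $d\xi=|\xi'|^3\,d\eta$); combined with the normalization \eqref{probability function} evaluated at the unit vector $\omega=\xi'/|\xi'|$, this first forces $H(|\xi'|)=|\xi'|^{-3}$ and then reduces the integral to $1+|\xi'|^m\int_{\R^3}|\eta|^mT(\eta,\omega)\,d\eta$. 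Since the same-speed property \eqref{same speed} means $T(\cdot,\omega)$ is supported on $\{|\eta|=|\omega|=1\}$, we have $|\eta|^m\equiv1$ there, and \eqref{probability function} gives $\int|\eta|^mT(\eta,\omega)\,d\eta=1$. This is the "crucial estimate on $T$" mentioned in the introduction, and it is exactly what makes the two $\lambda$-terms cancel below.

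Next I would carry out the energy computation. Multiplying \eqref{kinetic equation} by $(1+|\xi|^m)$ and integrating: the time term becomes $\frac{d}{dt}\int_{\R^3\times\R^3}(1+|\xi|^m)f\,d\xi\,dx$ (the $t$-derivative commutes with the $(x,\xi)$-integral for smooth, rapidly decaying $f$); the transport term vanishes because $\xi$ is independent of $x$, so $\xi\cdot\nabla_xf=\Dv_x(\xi f)$ and $\int_{\R^3}\Dv_x(\xi f)\,dx=0$ for each fixed $\xi$, whence $\int(1+|\xi|^m)\xi\cdot\nabla_xf\,d\xi\,dx=0$. On the right-hand side the damping term contributes $-\int\mu(\mathfrak{n})(1+|\xi|^m)f\,d\xi\,dx$, while for the two $\lambda$-terms I apply Fubini to the gain term and the kernel identity above,
\begin{equation*}
\lambda\int_{\R^3}\!\int_{\R^3}\!\int_{\R^3}(1+|\xi|^m)T(\xi,\xi')f(t,x,\xi')\,d\xi\,d\xi'\,dx=\lambda\int_{\R^3}\!\int_{\R^3}(1+|\xi'|^m)f(t,x,\xi')\,d\xi'\,dx,
\end{equation*}
which, after relabeling $\xi'\to\xi$, cancels $-\lambda\int(1+|\xi|^m)f\,d\xi\,dx$ exactly. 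This leaves the differential identity
\begin{equation*}
\frac{d}{dt}\int_{\R^3\times\R^3}(1+|\xi|^m)f\,d\xi\,dx+\int_{\R^3\times\R^3}\mu(\mathfrak{n})(1+|\xi|^m)f\,d\xi\,dx=0 ,
\end{equation*}
and integrating in time over $[0,T]$ with the initial condition \eqref{initial data} yields the claimed bound, in fact with equality.

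I expect the only genuine points of care to be the justification of the analytic manipulations — differentiating under the integral sign, Fubini in the collision term, and the absence of flux at spatial infinity — which for the \emph{smooth} solutions considered here all follow from the decay in $(x,\xi)$ inherited from the initial data and the moment bounds on $|\xi|^mf$; this is also, presumably, why the statement is phrased as an inequality rather than an equality, so that it survives the later passage to approximate solutions. The other mild obstacle is the kernel identity itself: one must check that the scaling in \eqref{self-similarity} is consistent with the normalization \eqref{probability function} and with the support constraint \eqref{same speed}, so that moments of $T$ are reproduced \emph{exactly} (and not merely bounded) — otherwise the $\lambda$-terms would only be dominated, though that still suffices for the stated inequality.
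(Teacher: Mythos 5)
Your proposal is correct and follows essentially the same route as the paper: multiply \eqref{kinetic equation} by $(1+|\xi|^m)$, integrate in $(x,\xi)$, use Fubini together with the same-speed support property \eqref{same speed} and the normalization \eqref{probability function} to cancel the two $\lambda$-terms exactly, and integrate in time. The only cosmetic difference is that your derivation of the kernel moment identity detours through the self-similarity \eqref{self-similarity} and the computation $H(|\xi'|)=|\xi'|^{-3}$, which the paper does not need here (it uses that only in the separate lemma bounding $\int T(\xi,\xi')\,d\xi'$); the cancellation follows directly from \eqref{same speed} and \eqref{probability function} alone, as you also note.
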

\begin{proof} For any smooth solutions of \eqref{kinetic equation}-\eqref{initial data}, multiplying $(1+|\xi|^m)$ on both sides of \eqref{kinetic equation}, one obtains that
\begin{equation}
\begin{split}
\label{energy law-1}
\frac{d}{dt}\int_{\R^3\times\R^3}(1+|\xi|^m)f\,d\xi\,d x&+ \int_{\R^3\times\R^3}\mu(\mathfrak{n})(1+|\xi|^m)f\,d\xi\,dx
\\&=\int_{\R^3\times\R^3}\mathfrak{Q}_2(f)(1+|\xi|^m)\,d\xi\,dx.
\end{split}
\end{equation}
We calculate the right side term of \eqref{energy law-1} as follows:
\begin{equation}
\begin{split}
\label{operator 2 for energy 1}
\int_{\R^3\times\R^3}\mathfrak{Q}_2(f)&(1+|\xi|^m)\,d\xi\,dx
=-\lambda\int_{\R^3\times\R^3}(1+|\xi|^m)f\,d\xi\,dx
\\&+\lambda\int_{\R^3\times\R^3}\int_{\R^3}T(\xi,\xi')f(t,x,\xi')\,d\xi'(1+|\xi|^m)\,d\xi\,dx.
\end{split}
\end{equation}
 Thanks to Fubini's theorem, \eqref{same speed} and \eqref{probability function},
the second term of right side on  \eqref{operator 2 for energy 1} gives us
\begin{equation}
\label{second term right side}
\begin{split}&
\lambda\int_{\R^3\times\R^3}\int_{\R^3}T(\xi,\xi')f(t,x,\xi')\,d\xi'(1+|\xi|^m)\,d\xi\,dx
\\&=
\lambda\int_{\R^3\times\R^3\times\R^3}T(\xi,\xi')f(t,x,\xi')(1+|\xi'|^m)\,d\xi'\,d\xi\,d x
\\&=\lambda\int_{\R^3\times\R^3}(1+|\xi|^m)f\,d\xi\,dx.
\end{split}
\end{equation}
This, with \eqref{operator 2 for energy 1}, implies
\begin{equation}
\begin{split}
\label{operator 2 for energy}
\int_{\R^3\times\R^3}\mathfrak{Q}_2(f)&(1+|\xi|^m)\,d\xi\,dx
=0.
\end{split}
\end{equation}
 Combining  \eqref{energy law-1} and \eqref{operator 2 for energy}, one obtains
\begin{equation*}
\begin{split}
\frac{d}{dt}\int_{\R^3\times\R^3}(1+|\xi|^m)f\,d\xi\,d x&+ \int_{\R^3\times\R^3}\mu(\mathfrak{n})(1+|\xi|^m)f\,d\xi\,dx
=0,
\end{split}
\end{equation*}
which yields
\begin{equation}
\label{energy law}
\begin{split}
\int_{\R^3\times\R^3}(1+|\xi|^m)f\,d\xi\,d x&+ \int_0^T\int_{\R^3\times\R^3}\mu(\mathfrak{n})(1+|\xi|^m)f\,d\xi\,dx\,dt
\\
&\leq \int_{\R^3\times\R^3}(1+|\xi|^m)f^0\,d\xi\,d x,
\end{split}
\end{equation}
for any $T>0$, at least for the smooth solutions. Thus, it is natural to ask the initial data satisfies the following ones
\begin{equation}
\label{condition on initial data}
 \int_{\R^3\times\R^3}(1+|\xi|^m)f^0\,d\xi\,d x<+\infty.
\end{equation}

\end{proof}

To develop further estimates, we will rely on the following lemma.
 Under the assumption \eqref{self-similarity}, we can show the following Lemma on the probability function $T(\xi,\xi')$.
 \begin{Lemma}
 Assume that $T(\xi,\xi')$ satisfies \eqref{probability function} and \eqref{self-similarity}, and $T$ is invariant under rotations of the pair $(\xi,\xi')$, then
  \begin{equation}
\label{probability function-2}
\int_{\R^3} T(\xi,\xi')\,d\xi'\leq \bar{K}<\infty.
\end{equation}
 \end{Lemma}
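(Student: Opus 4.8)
The plan is to use the self-similarity \eqref{self-similarity} to collapse $T$ onto the unit sphere—recalling that, by the standing assumption \eqref{same speed}, $T(\cdot,\xi')$ is supported on $\{\xi:|\xi|=|\xi'|\}$ and, symmetrically, $T(\xi,\cdot)$ on $\{\xi':|\xi'|=|\xi|\}$—then to invoke the rotational invariance of $T$ to recognize the leftover spherical integral as an $\omega$-independent constant, and finally to read off the finiteness of that constant from the normalization \eqref{probability function}. The point is that the normalizing constant produced by integrating $T$ in $\xi$ is the same as the one produced by integrating in $\xi'$, so the two cancel and the $\xi'$-integral is bounded uniformly in $\xi$.

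\textbf{Step 1 (normalizing the self-similar profile).} By \eqref{self-similarity}, for $\xi'\neq 0$, $T(\xi,\xi')=H(|\xi'|)\,T\!\left(\tfrac{\xi}{|\xi'|},\tfrac{\xi'}{|\xi'|}\right)$. Integrating in $\xi$ over the sphere $|\xi|=|\xi'|$ carrying $T(\cdot,\xi')$, and rescaling $\xi=|\xi'|\sigma$ with $\sigma\in S^2$ (the surface element scales by $|\xi'|^2$), \eqref{probability function} gives
\[ 1=H(|\xi'|)\,|\xi'|^2\int_{S^2}T(\sigma,\omega)\,dS(\sigma),\qquad \omega:=\tfrac{\xi'}{|\xi'|}. \]
Invariance of $T$ under rotations of the pair $(\xi,\xi')$ forces $T(\sigma,\omega)=\Phi(\sigma\cdot\omega)$ on $S^2\times S^2$ for some profile $\Phi$, so $\int_{S^2}T(\sigma,\omega)\,dS(\sigma)=2\pi\int_{-1}^1\Phi(t)\,dt=:c_0$ is independent of $\omega$. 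Thus $H(|\xi'|)\,|\xi'|^2c_0=1$; in particular $c_0\in(0,\infty)$ (if $c_0$ were $0$ or $\infty$ then $T(\cdot,\xi')\equiv 0$, contradicting its unit mass), and $H(r)\,r^2=c_0^{-1}$ for all $r>0$.

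\textbf{Step 2 (the other integral).} Fix $\xi\neq 0$. Since $T(\xi,\cdot)$ is carried by $|\xi'|=|\xi|$ and there $T(\xi,\xi')=H(|\xi|)\,T(\xi/|\xi|,\xi'/|\xi|)$ by \eqref{self-similarity}, writing $\xi'=|\xi|\omega$ with $\omega\in S^2$ and scaling the surface element by $|\xi|^2$,
\[ \int_{\R^3}T(\xi,\xi')\,d\xi'=H(|\xi|)\,|\xi|^2\int_{S^2}\Phi\!\left(\tfrac{\xi}{|\xi|}\cdot\omega\right)dS(\omega). \]
By the same rotational invariance, $\int_{S^2}\Phi(e\cdot\omega)\,dS(\omega)=2\pi\int_{-1}^1\Phi(t)\,dt=c_0$ for every unit vector $e$. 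Combining with Step 1 yields $\int_{\R^3}T(\xi,\xi')\,d\xi'=c_0^{-1}\cdot c_0=1$ for every $\xi$, so \eqref{probability function-2} holds with $\bar K=1$ (any finite bound would do; only $c_0<\infty$ is used).

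\textbf{Expected main obstacle.} The substance is the measure-theoretic bookkeeping in Steps 1--2: because \eqref{same speed} forces $T(\cdot,\xi')$ to be a measure concentrated on a sphere rather than an honest $L^1$ function on $\R^3$, the ``changes of variables'' must be read as reductions of the associated surface measures, and one must verify that the self-similar scaling \eqref{self-similarity} is compatible with this—precisely what pins the exponent in $H(r)=c_0^{-1}r^{-2}$. Once that is in place the rest is immediate, since rotational invariance is exactly the statement that the spherical average $\int_{S^2}\Phi(e\cdot\omega)\,dS(\omega)$ does not depend on the unit vector $e$. (If instead one keeps $T$ a bona fide function on $\R^3\times\R^3$ and does not invoke \eqref{same speed}, the identical computation runs with the full $\R^3$ Jacobian, giving $H(r)=r^{-3}$ and $\int_{\R^3}T(\xi,\xi')\,d\xi'=\int_{\R^3}T(\hat\xi,\zeta)\,d\zeta$, which is again constant in $\xi$ by rotational invariance; in that reading the finiteness of the constant must be supplied by an additional bound on $T$.)
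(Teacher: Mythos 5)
Your argument reaches the right conclusion but by a genuinely different route from the paper, and the difference is worth spelling out. The paper never treats $T(\cdot,\xi')$ as a measure on the sphere: from \eqref{same speed} it extracts only the one-sided vanishing $T(\xi,\xi')=0$ for $|\xi|>|\xi'|$, keeps $T$ an honest function on $\R^3\times\R^3$, reduces the normalization \eqref{probability function} to an integral over the ball $B(0,|\xi'|)$ with the volume Jacobian $|\xi'|^3$, and obtains $H(r)=r^{-3}$ (this is \eqref{H value}); it then rewrites $\int_{\R^3}T(\xi,\xi')\,d\xi'$ as $\int_{B(0,1)}T\bigl(\xi/|\xi|,\eta\bigr)\,d\eta$ and simply declares this bounded by a fixed $\bar K$. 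Your main-line computation instead reads \eqref{same speed} as concentration on the sphere $|\xi|=|\xi'|$, uses the surface Jacobian $r^2$, and gets $H(r)=c_0^{-1}r^{-2}$ --- which contradicts \eqref{H value}. Since \eqref{H value} is used only inside this proof, nothing downstream breaks, but you should be aware that your Step 1 is not compatible with the paper's own reduction of \eqref{probability function}: if $T(\cdot,\xi')$ were literally carried by a sphere, the volume integral in \eqref{probability function} (and the kernel integrals $\int T(\xi,\xi')f(t,x,\xi')\,d\xi'$ appearing throughout the paper) could not be interpreted as written. Your closing parenthetical, which redoes the computation with the full $\R^3$ Jacobian and lands on $H(r)=r^{-3}$ and $\int_{\R^3}T(\xi,\xi')\,d\xi'=\int_{B(0,1)}T(\xi/|\xi|,\eta)\,d\eta$, is the version that matches the paper.

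What your approach buys, in either reading, is the one thing the paper leaves unsaid: \emph{why} the final quantity is finite. You invoke the rotational-invariance hypothesis explicitly to show that the residual integral over directions is a constant independent of $\xi/|\xi|$ (and, in your surface-measure reading, that this constant cancels against the one produced by normalizing in $\xi$, giving the sharp value $\bar K=1$). The paper uses the rotational-invariance hypothesis only implicitly and asserts the bound $\leq\bar K$ without argument; your observation that rotational invariance makes $\int_{B(0,1)}T(e,\eta)\,d\eta$ independent of the unit vector $e$ is exactly the missing justification, although in the function reading one must still grant (as the paper tacitly does) that this single constant is finite, since the normalization of $T$ in the $\xi$-variable does not by itself control its integral in the $\xi'$-variable.
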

%%%%%%%%%%%%%%%%%%%%%%%%%
\begin{proof} The proof is motivated by the work of \cite{LV}.
From \eqref{same speed}, we  deduce
\begin{equation}
\label{big speed case}
T(\xi,\xi')=0\quad\text{ if } |\xi|>|\xi'|.
\end{equation}
By \eqref{probability function} and \eqref{big speed case}, we find
\begin{equation}
\begin{split}
\label{probability function-3}
1=\int_{\R^3}T(\xi,\xi')\,d\xi&=\int_{B(0,|\xi'|)}T(\xi,\xi')\,d\xi+\int_{|\xi|>|\xi'|}T(\xi,\xi')\,d\xi
\\&=\int_{B(0,|\xi'|)}T(\xi,\xi')\,d\xi,
\end{split}
\end{equation}
where $B(0,|\xi'|)$ is the set of points in the interior of a sphere of radius $ |\xi'|$, centered at $ 0$.
Note that, % $T$ is invariant under rotations of the pair $(\xi,\xi')$,
  \eqref{self-similarity} and \eqref{probability function-3}, one obtains
\begin{equation*}
\begin{split}
%\label{H value}
1&=H(|\xi'|)\int_{B(0,|\xi'|)}T(\frac{\xi}{|\xi'|},\frac{\xi'}{|\xi'|})\,d\xi
\\&=H(|\xi'|)\int_{B(0,1)}T(z,\frac{\xi'}{|\xi'|})|\xi'|^3\,dz
\\&=|\xi'|^3 H(|\xi'|),
\end{split}
\end{equation*}
where $z=\frac{\xi}{|\xi'|}.$
This gives us
\begin{equation}
\label{H value}
H(|\xi'|)=\frac{1}{|\xi'|^3},
\end{equation}
and hence \begin{equation*}
\begin{split}
\int_{\R^3}T(\xi,\xi')\,d\xi'&=\int_{B(0,|\xi'|)}H(|\xi'|)T(\frac{\xi}{|\xi'|},\frac{\xi'}{|\xi'|})\,d\xi'
\\&=\frac{1}{|\xi|^3}\int_{B(0,|\xi|)}T(\frac{\xi}{|\xi|},\frac{\xi'}{|\xi|})\,d\xi'
\\&=\frac{1}{|\xi|^3}\int_{B(0,1)}T(\frac{\xi}{|\xi|},\eta)|\xi|^3\,d\eta
\\&=\int_{B(0,1)}T(\frac{\xi}{|\xi|},\eta)\,d\eta
\\&\leq \bar{K},
\end{split}
\end{equation*}
where we used \eqref{same speed} and \eqref{H value}, $\bar{K}>0$ is a fixed number.
\end{proof}
With these two lemmas, we are ready to construct smooth solutions of a suitable approximation and pass to the limits to recover the weak solutions.
\vskip0.3cm

\section{Regularized equation}
The subjective of this section is to construct a sequence of smooth solutions to a regularized equation and to derive some uniformly estimates on them. In particular,
we construct a sequence of solutions verifying the following regularized equation
\begin{equation}
\label{regularized equation}
\begin{cases}
&(f_k)_t+\xi\cdot\nabla f_k=-\mu(\mathfrak{n}_{k-1})f_k-\lambda f_k+\lambda\int_{\R^3}T(\xi,\xi')f_{k-1}(t,x,\xi')\,d\xi',
\\& f_k(0,x,\xi)=f^0_{\varepsilon}(x,\xi),
\\& \mathfrak{n}_{k-1}=\int_{\R^3}f_{k-1}\,d\xi,
\\&f_0(t,x,\xi)=0,
\end{cases}
\end{equation}
where $k\geq 0$ are integers, $h_{\varepsilon}(x)=h*\eta_{\varepsilon}(x).$ In fact, a similar approximation in \cite{LV} motivated us to proposal the above ones.  For any given $f_{k-1}$, we can solve regularized equation \eqref{regularized equation} by the characteristic method.
In particular, the smooth solution of the following ODE:
\begin{equation}
\begin{cases}
\label{ode}
%\begin{split}
&\frac{dx}{dt}=\xi;\\
&x(0)=x
\end{cases}
\end{equation}
is given by $x(t)=x+t\xi.$  Thus, along the characteristic line $$x(t)=x+t\xi,$$ the initial value problem \eqref{regularized equation} corresponds to the following ODE:
\begin{equation}
\label{ODE system}
\begin{cases}
&\frac{d}{dt}f_k(t,x(t),\xi)=-\mu(\mathfrak{n}_{k-1})f_k(t,x(t),\xi)-\lambda f_k(t,x(t),\xi)
\\&\quad\quad\quad \quad\quad\quad \quad\quad\quad  +\lambda\int_{\R^3}T(\xi,\xi')f_{k-1}(t,x(t),\xi')\,d\xi',
\\& \mathfrak{n}_{k-1}=\int_{\R^3}f_{k-1}\,d\,\xi,
\\&f_k(0,x,\xi)=f_{\varepsilon}^0(x,\xi).
\end{cases}
\end{equation}
Our task is to solve $f_k$ for any given $f_{k-1}.$
By means of the classical theory of ODE, there exists a smooth solution to \eqref{regularized equation}
\begin{equation}
\label{regularized solution}
\begin{split}
f_k(t,x,\xi)&=e^{-\int_0^t\left(\mu(\mathfrak{n}_{k-1})(s)+\lambda\right)\,ds}f_{\varepsilon}^0(x+t\xi,\xi)
\\&+\int_0^t\int_{\R^3}e^{-\int_{\tau}^t\left(\mu(\mathfrak{n}_{k-1})(s)+\lambda\right)\,ds}T(\xi,\xi')f_{k-1}(\tau,x(\tau),\xi')\,d\xi'\,d\tau.
\end{split}
\end{equation}
This yields
\begin{equation*}
f_k\geq 0\quad\text{ for all } k\geq 0.
\end{equation*}

\bigskip

The following lemma gives us a general version of Gronwall inequality  for a sequence of the nonnegative continuous functions. It allows us to derive further uniformly estimates of $f_k$.  This Gronwall inequality was proved by induction in the paper of Boudin-Desvillettes-Grandmont-Moussa \cite{BDGM}. We rely on it to deduce our several key estimates in this current paper.
\begin{Lemma}
\label{Gronwall inequality}
Let $T>0,$ a sequence $\{a_n\}_{n=0}^{\infty}$ of nonnegative continuous function on $[0,T]$, for any $n\geq 0$, if
$$a_{n}(t)\leq A+B\int_0^ta_{n-1}(s)\,ds,\quad\text{ for any } 0\leq t\leq T,$$
then, there exists a constant $K\geq 0$ such that
$$a_n(t)\leq K e^{Kt}, \quad\text{ for any } 0\leq t\leq T,$$
where $$K=\max\{A,B,\sup_{0\leq t\leq T}a_0\}.$$
\end{Lemma}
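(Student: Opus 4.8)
The statement is a variant of Gronwall's inequality for a sequence of functions: given $a_n(t) \le A + B\int_0^t a_{n-1}(s)\,ds$ for all $n \ge 0$, with $f_0 \equiv 0$-type initial term replaced here by $a_0$, I want to prove $a_n(t) \le K e^{Kt}$ with $K = \max\{A, B, \sup_{[0,T]} a_0\}$. The natural approach is induction on $n$. The base case $n = 0$ is immediate: $a_0(t) \le \sup_{[0,T]} a_0 \le K \le K e^{Kt}$ since $e^{Kt} \ge 1$ for $t \ge 0$ and $K \ge 0$.

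For the inductive step, I would assume $a_{n-1}(s) \le K e^{Ks}$ for all $s \in [0,T]$ and plug this into the hypothesis:
\begin{equation*}
a_n(t) \le A + B\int_0^t a_{n-1}(s)\,ds \le A + B\int_0^t K e^{Ks}\,ds = A + B\,\frac{K}{K}\bigl(e^{Kt} - 1\bigr) = A + B\bigl(e^{Kt}-1\bigr).
\end{equation*}
(One should treat the degenerate case $K = 0$ separately — but then $A = B = 0$ and the bound $a_n \equiv 0$ is trivial, so assume $K > 0$.) Now since $A \le K$ and $B \le K$, we get
\begin{equation*}
a_n(t) \le K + K\bigl(e^{Kt} - 1\bigr) = K e^{Kt},
\end{equation*}
which closes the induction.

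**Main obstacle.** Honestly there is no serious obstacle; the only thing to be careful about is the bookkeeping that makes the constant $K = \max\{A,B,\sup a_0\}$ work uniformly in $n$ — that is, one must verify that the \emph{same} $K$ serves for every $n$, which is exactly why the estimate is set up with $A \le K$ and $B \le K$ simultaneously. One should also note continuity of $a_n$ is only needed to ensure the integrals make sense and the pointwise bounds are meaningful on all of $[0,T]$; measurability and local boundedness would suffice. If one prefers to avoid the explicit integration, an alternative is to iterate the inequality $n$ times to get $a_n(t) \le A\sum_{j=0}^{n-1} \frac{(Bt)^j}{j!} + B^n \int_{0 \le s_n \le \cdots \le s_1 \le t} a_0(s_n)\,ds$, bound $a_0$ by $\sup a_0 \le K$, and recognize the resulting series as dominated by $K e^{Bt} \le K e^{Kt}$; but the direct induction above is cleaner and is presumably the argument of \cite{BDGM}.
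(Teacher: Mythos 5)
Your induction is correct, and it is essentially the paper's approach: the paper does not actually write out a proof of this lemma, but states that it "was proved by induction" in \cite{BDGM}, and your argument is exactly that standard induction (base case from $\sup a_0\le K$, inductive step from $A\le K$, $B\le K$, and $\int_0^t Ke^{Ks}\,ds=e^{Kt}-1$), including the sensible handling of the degenerate case $K=0$. Nothing further is needed.
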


\vskip0.3cm

In \eqref{regularized solution}, the value of $f_k$ depends on $f_{k-1}$, thus we apply Lemma \ref{Gronwall inequality} to get the following lemma on the estimates of $f_k$.
\begin{Lemma}
\label{Lemma bounds on fk}
If $f_k(t,x,\xi)$ is given by \eqref{regularized solution}, for any $k\geq 0$, then $f_k(t,x,\xi)$ is bounded in $$L^{\infty}(0,T;L^{\infty}(\R^3\times\R^3))\cap L^{\infty}(0,T;L^1(\R^3\times\R^3)),$$
and hence
\begin{equation}
\label{Lp bound of f}
f_k(t,x,\xi)\;\;\text{  is bounded in } L^{\infty}(0,T;L^{p}(\R^3\times\R^3))\;\;\text{ for any } p\geq 1.
\end{equation}
\end{Lemma}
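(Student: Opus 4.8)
The plan is to prove the $L^\infty$ and $L^1$ bounds separately by induction on $k$, using the explicit representation formula \eqref{regularized solution}, and then interpolate to obtain the $L^p$ bounds. First I would establish the $L^\infty$ estimate. Since $\mu(\cdot)\geq 0$ and $\lambda>0$, the exponential factors $e^{-\int_0^t(\mu(\mathfrak{n}_{k-1})(s)+\lambda)\,ds}$ and $e^{-\int_\tau^t(\mu(\mathfrak{n}_{k-1})(s)+\lambda)\,ds}$ appearing in \eqref{regularized solution} are bounded above by $1$ (respectively by $e^{-\lambda(t-\tau)}$). Taking the essential supremum over $(x,\xi)$ in \eqref{regularized solution}, the first term is bounded by $\|f^0_\varepsilon\|_{L^\infty}\leq\|f^0\|_{L^\infty}$; for the second term I would use Fubini together with the bound \eqref{probability function-2}, namely $\int_{\R^3}T(\xi,\xi')\,d\xi'\leq\bar K$, to get
\[
\left\|\int_0^t\int_{\R^3}e^{-\int_\tau^t(\cdots)}T(\xi,\xi')f_{k-1}(\tau,x(\tau),\xi')\,d\xi'\,d\tau\right\|_{L^\infty}
\leq \bar K\int_0^t e^{-\lambda(t-\tau)}\|f_{k-1}(\tau)\|_{L^\infty}\,d\tau.
\]
Setting $a_k(t)=\|f_k(t)\|_{L^\infty(\R^3\times\R^3)}$, this reads $a_k(t)\leq\|f^0\|_{L^\infty}+\bar K\int_0^t a_{k-1}(s)\,ds$, which is precisely the hypothesis of the variant Gronwall inequality, Lemma \ref{Gronwall inequality} (with $a_0\equiv 0$). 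Hence $a_k(t)\leq Ke^{Kt}\leq Ke^{KT}$ on $[0,T]$, uniformly in $k$.

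Next I would treat the $L^1$ estimate in the same spirit. Integrating \eqref{regularized solution} in $(x,\xi)$: the first term contributes $\|f^0_\varepsilon\|_{L^1}\leq\|f^0\|_{L^1}$ after the change of variables $x\mapsto x+t\xi$, which has Jacobian $1$. For the second term, after applying Fubini and the change of variables along characteristics (again Jacobian $1$ in $x$), I would integrate in $\xi$ first using \eqref{probability function}, $\int_{\R^3}T(\xi,\xi')\,d\xi=1$, to obtain $\int_{\R^3\times\R^3}f_{k-1}(\tau,x,\xi')\,d\xi'\,dx=\|f_{k-1}(\tau)\|_{L^1}$. Writing $b_k(t)=\|f_k(t)\|_{L^1(\R^3\times\R^3)}$, this yields $b_k(t)\leq\|f^0\|_{L^1}+\lambda\int_0^t b_{k-1}(s)\,ds$, and another application of Lemma \ref{Gronwall inequality} gives $b_k(t)\leq Ke^{KT}$ uniformly in $k$.

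Finally, the $L^p$ bound \eqref{Lp bound of f} for $1\le p\le\infty$ follows by the standard interpolation inequality $\|f_k(t)\|_{L^p}\leq\|f_k(t)\|_{L^1}^{1/p}\|f_k(t)\|_{L^\infty}^{1-1/p}$, and then taking the supremum over $t\in[0,T]$; the two uniform bounds just established make the right-hand side uniformly bounded in $k$. The main technical point to watch is the careful use of Fubini's theorem and the measure-preserving change of variables $x\mapsto x(\tau)=x+\tau\xi$ in the double integral term of \eqref{regularized solution}, together with deciding in which order to integrate so that \eqref{probability function} (for $L^1$) or \eqref{probability function-2} (for $L^\infty$) can be invoked; everything else is a routine application of the Gronwall lemma already stated. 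One should also note that the constant $\bar K$ in \eqref{probability function-2} requires the rotational invariance hypothesis of the previous lemma, which is implicitly assumed on $T(\xi,\xi')$ throughout.
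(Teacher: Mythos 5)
Your proposal is correct and takes essentially the same route as the paper: the $L^\infty$ bound from the representation formula \eqref{regularized solution} together with \eqref{probability function-2}, the $L^1$ bound from the normalization \eqref{probability function}, and Lemma \ref{Gronwall inequality} applied to the resulting recursive inequalities, followed by interpolation for \eqref{Lp bound of f}. The only (cosmetic) difference is that for the $L^1$ estimate the paper integrates the ODE form \eqref{ODE system} and drops the nonnegative damping terms, whereas you integrate the Duhamel formula directly; both yield the identical inequality $b_k(t)\leq \|f^0_\varepsilon\|_{L^1}+\lambda\int_0^t b_{k-1}(s)\,ds$.
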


\begin{proof}
Using \eqref{regularized solution} and \eqref{probability function-2}, we deduce
\begin{equation}
\begin{split}
\label{AA}
\|f_k(t,x,\xi)\|_{L^{\infty}}&\leq \|f_{\varepsilon}^0\|_{L^{\infty}}+\lambda \bar{K} \int_0^t\|f_{k-1}(\tau,x,\xi)\|_{L^{\infty}}\,d\tau.
\end{split}
\end{equation}
Applying Lemma \ref{Gronwall inequality} to \eqref{AA}, one obtains
$$\|f_k\|_{L^{\infty}}\leq \lambda K \bar{K} e^{Kt}\quad\text{ for any } 0\leq t\leq T,$$
where $$K=\max\{\|f_{\varepsilon}^0\|_{L^{\infty}},1,\|f_0\|_{L^{\infty}}\}.$$ Thus, $f_k(t,x,\xi)$ is bounded in $L^{\infty}(0,T;L^{\infty}(\R^3\times\R^3))$ for any $k\geq 0.$

By \eqref{ODE system},  for any $0\leq t\leq T$, we find
\begin{equation}
\begin{split}
\label{AAAA}
&\frac{d}{dt}\int_{\R^3\times\R^3}f_k(t,x(t),\xi)\,d\xi\,d x +\int_{\R^3\times\R^3}\mu(\mathfrak{n}_{k-1})f_k(t,x(t),\xi)\,dx\,d\xi
\\&+\lambda\int_{\R^3\times\R^3} f_k(t,x(t),\xi)\,dx\,d\xi
\\&= \lambda
\int_{\R^3\times\R^3}\int_{\R^3}T(\xi,\xi')f_{k-1}(t,x(t),\xi')\,d\xi'\,d\xi\,d x.
\end{split}
\end{equation}
The term on the right side of \eqref{AAAA} is given by
$$\lambda
\int_{\R^3\times\R^3}f_{k-1}(t,x(t),\xi)\,d\xi\,d x.$$
Thus, \eqref{AAAA} gives us
\begin{equation*}
\begin{split}
&\int_{\R^3\times\R^3}f_k\,d\xi\,d x\leq \int_{\R^3\times\R^3}f_{\varepsilon}^0\,d\xi\,d x
\\&\quad\quad\quad\quad\quad\quad\quad+\lambda \int_0^t\int_{\R^3\times\R^3}f_{k-1}(\tau,x(\tau),\xi)\,d\xi\,dx\,d\tau.
\end{split}
\end{equation*}
Thanks to Lemma \ref{Gronwall inequality}, there exists a constant $K>0$, for any $0\leq t\leq T$, such that
$$
\int_{\R^3\times\R^3}|f_k|\,d\xi\,d x\leq \lambda K e^{Kt}.$$
Thus, $f_k(t,x,\xi)$ is bounded in $L^{\infty}(0,T;L^{1}(\R^3\times\R^3)).$
\end{proof}

\vskip0.3cm
The estimates in the following Lemma \ref{Lemma of energy law for regularized equation} are crucial to control the bounds of the kinetic density and the kinetic current. The proof is also based on
 Lemma \ref{Gronwall inequality} again.
\begin{Lemma}
\label{Lemma of energy law for regularized equation} If $f_k(t,x,\xi)$ is given by \eqref{regularized solution}, and $$\int_{\R^3\times\R^3}(1+|\xi|^m)f^0_{\varepsilon}\,d\xi\,d x<+\infty$$ for some $m\geq 1,$ then, there exists a constant $K>0$, such that
\begin{equation}
\label{m order estimate}
\int_{\R^3\times\R^3}(1+|\xi|^m)f_k\,d\xi\,d x\leq \lambda Ke^{Kt},
\end{equation}
for any $0\leq t\leq T$.
\end{Lemma}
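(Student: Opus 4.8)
The plan is to run the moment computation of Section~2 directly on the regularized equation \eqref{regularized equation} and then close the recursion with the Gronwall inequality of Lemma~\ref{Gronwall inequality}. Write
\[
a_k(t)=\int_{\R^3\times\R^3}(1+|\xi|^m)f_k(t,x,\xi)\,d\xi\,dx .
\]
The first point to settle is that $a_k(t)<\infty$ for every $k$ and every $t\in[0,T]$, since otherwise the moment manipulations are meaningless. This I would prove by induction on $k$ using the explicit representation \eqref{regularized solution}: $a_0\equiv0$ because $f_0=0$; and if $|\xi|^mf_{k-1}\in L^\infty(0,T;L^1(\R^3\times\R^3))$, then in \eqref{regularized solution} the free-streaming term contributes (the exponential prefactor being $\le1$ and, after the substitution $x\mapsto x-t\xi$ at $\xi$ fixed) at most $\int_{\R^3\times\R^3}(1+|\xi|^m)f^0_\varepsilon\,d\xi\,dx$, which is finite because the mollification acts only in $x$, so this quantity equals $\int_{\R^3\times\R^3}(1+|\xi|^m)f^0\,d\xi\,dx<\infty$; while the gain term is controlled by \eqref{probability function-2} and the inductive bound on $a_{k-1}$.

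Next I would multiply \eqref{regularized equation} by $(1+|\xi|^m)$ and integrate over $\R^3\times\R^3$, working along the characteristics $x(t)=x+t\xi$ exactly as in \eqref{AAAA}. The transport term $\xi\cdot\na_x f_k$ is in divergence form in $x$ and integrates to zero; rigorously I would insert a cutoff $\chi_R(x)$, use that $\na_x\chi_R\to0$ and that $f_k\in L^\infty(0,T;L^1)$ by Lemma~\ref{Lemma bounds on fk}, and pass to the limit $R\to\infty$. This yields
\[
\frac{d}{dt}a_k(t)+\int_{\R^3\times\R^3}\mu(\mathfrak{n}_{k-1})(1+|\xi|^m)f_k\,d\xi\,dx+\lambda\,a_k(t)=\lambda\int_{\R^3\times\R^3}\int_{\R^3}T(\xi,\xi')f_{k-1}(t,x,\xi')(1+|\xi|^m)\,d\xi'\,d\xi\,dx .
\]
For the right-hand side I would repeat verbatim the computation leading to \eqref{second term right side}: by Fubini, by \eqref{same speed} (so that $1+|\xi|^m=1+|\xi'|^m$ on $\supp T$), and by \eqref{probability function}, it equals $\lambda\,a_{k-1}(t)$.

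Since $\mu\ge0$ and $f_k\ge0$, the damping term is nonnegative and can be dropped, as can $\lambda\,a_k(t)\ge0$; hence
\[
\frac{d}{dt}a_k(t)\le\lambda\,a_{k-1}(t),\qquad 0\le t\le T,
\]
and integrating in time, $a_k(t)\le A+\lambda\int_0^t a_{k-1}(s)\,ds$ with $A:=\int_{\R^3\times\R^3}(1+|\xi|^m)f^0\,d\xi\,dx<\infty$ and $a_0\equiv0$. This is precisely the hypothesis of Lemma~\ref{Gronwall inequality} with that $A$ and with $B=\lambda$, so there is a constant $K=\max\{A,\lambda,0\}$ for which $a_k(t)\le Ke^{Kt}$ for all $k$ and all $t\in[0,T]$, which gives \eqref{m order estimate} after enlarging $K$ to absorb the factor $\lambda$. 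The only genuinely delicate step is the justification of the moment identity — namely the integrability of $(1+|\xi|^m)f_k$ and the vanishing of the transport contribution — which is exactly what the inductive finiteness argument together with the cutoff-and-limit procedure is designed to handle; the rest is the algebra already performed in Section~2.
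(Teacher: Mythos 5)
Your proposal is correct and follows essentially the same route as the paper: multiply by $(1+|\xi|^m)$, integrate along characteristics, reduce the gain term to $\lambda\,a_{k-1}(t)$ via Fubini together with \eqref{same speed} and \eqref{probability function}, drop the nonnegative damping term, and close the recursion $a_k(t)\le A+\lambda\int_0^t a_{k-1}(s)\,ds$ with Lemma \ref{Gronwall inequality}. The only difference is that you supply additional justification (inductive finiteness of the moments from \eqref{regularized solution} and a cutoff argument for the transport term) that the paper passes over in silence.
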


\begin{proof}
For some $m\geq 1$, using $1+|\xi|^m$ to multiply on
the both sides of \eqref{ODE system}, one obtains the following energy law
\begin{equation}
\label{energy law for regularized equation}
\begin{split}
\frac{d}{dt}\int_{\R^3\times\R^3}&(1+|\xi|^m)f_k\,d\xi\,d x+\int_{\R^3\times\R^3}\mu(\mathfrak{n}_{k-1})(1+|\xi|^m)f_k\,d\xi\,d x
\\&+\lambda\int_{\R^3\times\R^3}(1+|\xi|^m)f_k\,d\xi\,d x
\\&=\lambda\int_{\R^3\times\R^3\times\R^3}T(\xi,\xi')f_{k-1}(t,x,\xi')(1+|\xi|^m)\,d\xi'\,d\xi\,d x.
\end{split}
\end{equation}
Thanks to  \eqref{same speed}, \eqref{probability function}, and the Fubini's theorem, the right side term on \eqref{energy law for regularized equation} is given by
\begin{equation*}
\lambda \int_{\R^3\times\R^3}f_{k-1}(t,x,\xi)(1+|\xi|^m)\,d\xi\,dx.
\end{equation*}
This, with \eqref{energy law for regularized equation}, yields
\begin{equation}
\label{energy law for regularized equation-12}
\begin{split}
&\frac{d}{dt}\int_{\R^3\times\R^3}(1+|\xi|^m)f_k\,d\xi\,dx+\int_{\R^3\times\R^3}\mu(\mathfrak{n}_{k-1})(1+|\xi|^m)f_k\,d\xi\,dx
\\&+\lambda\int_{\R^3\times\R^3}(1+|\xi|^m)f_k\,d\xi\,dx
=
\lambda \int_{\R^3\times\R^3}f_{k-1}(t,x,\xi)(1+|\xi|^m)\,d\xi\,dx.
\end{split}
\end{equation}
Integrating on both sides of \eqref{energy law for regularized equation-12} with respect to $t$, one obtains
\begin{equation}
\label{energy inequality for regularized equation-1}
\begin{split}
&\int_{\R^3\times\R^3}(1+|\xi|^m)f_k\,d\xi\,dx
\leq \int_{\R^3\times\R^3}(1+|\xi|^m)f^0_{\varepsilon}\,d\xi\,dx
\\&+
\lambda \int_0^t\int_{\R^3\times\R^3}f_{k-1}(\tau,x(\tau,\xi),\xi)(1+|\xi|^m)\,d\xi\,dx\,d\tau.
\end{split}
\end{equation}
Thus, applying Lemma \ref{Gronwall inequality} to \eqref{energy inequality for regularized equation-1}, there exists a constant $K>0$, such that
\begin{equation}
\label{energy inequality for regularized equation}
\int_{\R^3\times\R^3}(1+|\xi|^m)f_k\,d\xi\,dx\leq \lambda Ke^{Kt}
\end{equation}
for any $0\leq t\leq T$ and any $k\geq 0,$ where $$K=\max\{\int_{\R^3\times\R^3}(1+|\xi|^m)f^0_{\varepsilon}\,d\xi\,dx,\;\; \lambda,\;\;
\sup_{0\leq t\leq T}\int_{\R^3\times\R^3}(1+|\xi|^m)f_0\,d\xi\,dx\}.$$
\end{proof}

\vskip0.3cm

\begin{Remark}
Lemma \ref{Lemma of energy law for regularized equation} allows us to deduce that the estimates on the kinetic density and the kinetic current. Those estimates, with the help of $L^p$ regularity of average velocities,  yield the strong convergence of them in $L^p$ space.
\end{Remark}
To our convenience, we introduce the kinetic density (zero moment)
$$\mathfrak{n}_k(t,x)=\int_{\R^N}f_k\,d\xi,$$
and the kinetic  current (first moment)
$$ j_k(t,x)=\int_{\R^N}\xi f_k\,d\xi$$
in the space $\R^N$ with respect to $\xi$.
\vskip0.3cm

 We estimate these quantities  in the following lemma \ref{Lemma of n-j}  that may be similar to the variation of the classical regularity of moments, see \cite{LP}. The estimates of  the kinetic density and the kinetic current help us to get the weak stability of kinetic equation.
\begin{Lemma}
\label{Lemma of n-j}For any $p\geq 1$, $0\leq t\leq T$, if $f_k$ is bounded in $L^{\infty}(\R^3\times\R^3\times[0,T]),$ we have\begin{equation*}
\|\mathfrak{n}_k(t,x)\|_{L^{\infty}(0,T;L^{\frac{N+p}{N}}(\R^N))}\leq C_{N,T}(\|f_k\|_{L^{\infty}}+1)\left(\int_{\R^N\times\R^N}|\xi|^pf_k\,d\xi\,dx\right)^{\frac{N}{N+p}},
\end{equation*}
$$\|j_k\|_{L^{\infty}(0,T;L^{\frac{N+p}{N+1}}(\R^N))} \leq C_{N,T}(\|f_k\|_{L^{\infty}}+1)\left(\int_{\R^N\times\R^N}|\xi|^pf_k\,d\xi\,dx\right)^{\frac{N+1}{N+p}}.$$
%and$$\|\int_{\R^3}|\xi|^2f_k\,d\xi\|_{L^{\infty}(0,T;L^{\frac{N+p}{N+2}}(\R^N))} \leq C_{N,T}(\|f_k\|_{L^{\infty}}+1)\left(\int_{\R^N\times\R^N}|\xi|^pf_k\,d\xi\,dx\right)^{\frac{N+2}{N+p}}.$$

\end{Lemma}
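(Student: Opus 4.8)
The plan is to establish both bounds by the classical truncation-and-optimization argument for velocity moments, performed pointwise in $(t,x)$ and then integrated in the space variable. Fix $t\in[0,T]$ and $x\in\R^N$, write $M(t,x)=\int_{\R^N}|\xi|^pf_k\,d\xi$, and for a parameter $R>0$ split each $\xi$-integral over $\{|\xi|\le R\}$ and $\{|\xi|>R\}$. On the ball I bound $f_k$ by $\|f_k\|_{L^\infty}$ and integrate the polynomial weight, producing a contribution $C_N\|f_k\|_{L^\infty}R^N$ to $\mathfrak{n}_k$ and $C_N\|f_k\|_{L^\infty}R^{N+1}$ to $|j_k|$. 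Outside the ball I insert $1=|\xi|^{-p}|\xi|^p$ (resp.\ $|\xi|=|\xi|^{1-p}|\xi|^p$) and use that $|\xi|\mapsto|\xi|^{-p}$ and, since $p\ge1$, $|\xi|\mapsto|\xi|^{1-p}$ are non-increasing, so on $\{|\xi|>R\}$ they are $\le R^{-p}$ and $\le R^{1-p}$; this gives contributions $R^{-p}M(t,x)$ and $R^{1-p}M(t,x)$. Hence
$$\mathfrak{n}_k(t,x)\le C_N\bigl(\|f_k\|_{L^\infty}R^N+R^{-p}M(t,x)\bigr),\qquad |j_k(t,x)|\le C_N\bigl(\|f_k\|_{L^\infty}R^{N+1}+R^{1-p}M(t,x)\bigr).$$

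Next I optimize in $R$. When $M(t,x)=0$ we have $f_k(t,x,\cdot)\equiv0$ and both estimates are trivial; otherwise I choose $R=\bigl(M(t,x)/(\|f_k\|_{L^\infty}+1)\bigr)^{1/(N+p)}$, which balances the two contributions. A direct substitution then gives the pointwise bounds
$$\mathfrak{n}_k(t,x)\le C_N(\|f_k\|_{L^\infty}+1)^{\frac{p}{N+p}}M(t,x)^{\frac{N}{N+p}},\qquad |j_k(t,x)|\le C_N(\|f_k\|_{L^\infty}+1)^{\frac{p-1}{N+p}}M(t,x)^{\frac{N+1}{N+p}},$$
and since $\frac{p}{N+p}<1$ and $\frac{p-1}{N+p}<1$ the prefactors are controlled by $\|f_k\|_{L^\infty}+1$.

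Finally I raise the first inequality to the power $\frac{N+p}{N}$ and the second to the power $\frac{N+p}{N+1}$ and integrate in $x\in\R^N$: in both cases the exponent of $M(t,x)$ becomes exactly $1$, so the right-hand side is a constant times $\int_{\R^N}M(t,x)\,dx=\int_{\R^N\times\R^N}|\xi|^pf_k\,d\xi\,dx$. Taking the corresponding root and then the supremum over $t\in[0,T]$ yields the two claimed estimates with $C_{N,T}=C_N$. I do not expect a genuine obstacle here; the only points needing care are the condition $p\ge1$ (used for the monotonicity of $|\xi|^{1-p}$ in the current estimate), the degenerate case $M(t,x)=0$, and the bookkeeping of exponents that makes the $x$-integral collapse onto $\int|\xi|^pf_k$.
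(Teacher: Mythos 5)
Your proposal is correct and follows essentially the same argument as the paper: split the $\xi$-integral at a radius $R$, bound $f_k$ by its $L^\infty$ norm inside the ball and insert the weight $|\xi|^p$ outside, optimize in $R$, and then integrate the resulting pointwise bound in $x$ (the paper takes $R=\bigl(\int|\xi|^pf_k\,d\xi\bigr)^{1/(N+p)}$ rather than your normalized choice, but this is immaterial). Your treatment of the degenerate case $\int|\xi|^pf_k\,d\xi=0$ and the explicit verification for $j_k$ are slightly more careful than the paper's, which only writes out the $\mathfrak{n}_k$ case.
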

\begin{proof} The proof is following the same line in the work of Hamdache \cite{H}.  For any $R>0$,
we estimate $\mathfrak{n}_k$ as follows
\begin{equation}
\label{split n}
\begin{split}
\mathfrak{n}_k(t,x)&=\int_{\R^N} f_k\,d\xi=\int_{|\xi|\leq R} f_k\,d\xi+\int_{|\xi|\geq R}f_k\,d\xi
\\&\leq C_NR^N\|f_k\|_{L^{\infty}}+\frac{1}{ R^p}\int_{|\xi|\geq R}|\xi|^pf_k\,d\xi.
\end{split}
\end{equation}
Taking
 $$R=\left(\int_{\R^N}|\xi|^pf_k\,d\xi\right)^{\frac{1}{N+p}},$$
  one obtains
\begin{equation*}
\mathfrak{n}_k(t,x)\leq C_{N}(\|f_k\|_{L^{\infty}}+1)\left(\int_{\R^N}|\xi|^pf_k\,d\xi\right)^{\frac{N}{N+p}}.
\end{equation*}
This yields
\begin{equation*}
\|\mathfrak{n}_k(t,x)\|_{L^{\infty}(0,T;L^{\frac{N+p}{N}}(\R^N))}\leq C_{N,T}(\|f_k\|_{L^{\infty}}+1)\left(\int_{\R^N\times\R^N}|\xi|^pf_k\,d\xi\,dx\right)^{\frac{N}{N+p}},
\end{equation*}
where $f_k$ is bounded in $L^{\infty}$ due to \eqref{Lp bound of f}.\\
Following the same arguments, we can show
$$\|j_k\|_{L^{\infty}(0,T;L^{\frac{N+p}{N+1}}(\R^N))} \leq C_{N,T}(\|f_k\|_{L^{\infty}}+1)\left(\int_{\R^N\times\R^N}|\xi|^pf_k\,d\xi\,dx\right)^{\frac{N+1}{N+p}}.$$
%and $$\|\int_{\R^3}|\xi|^2f_k\,d\xi\|_{L^{\infty}(0,T;L^{\frac{N+p}{N+2}}(\R^N))} \leq C_{N,T}(\|f_k\|_{L^{\infty}}+1)\left(\int_{\R^N\times\R^N}|\xi|^pf_k\,d\xi\,dx\right)^{\frac{N+2}{N+p}}.$$
\end{proof}

\vskip0.3cm
On one hand, in three dimensional space,
 Lemma \ref{Lemma of n-j} implies
\begin{equation}
\label{nk estimate}
\|\mathfrak{n}_k(t,x)\|_{L^{\infty}(0,T;L^{\frac{3+m}{3}}(\R^3))}\leq C_{N}(\|f_k\|_{L^{\infty}}+1)\left(\int_{\R^3\times\R^3}|\xi|^mf_k\,d\xi\,dx\right)^{\frac{3}{3+m}},
\end{equation}
\begin{equation}
\label{jk estimate}
\|j_k(t,x)\|_{L^{\infty}(0,T;L^{\frac{3+m}{4}}(\R^3))}\leq C_{N}(\|f_k\|_{L^{\infty}}+1)\left(\int_{\R^3\times\R^3}|\xi|^mf_k\,d\xi\,dx\right)^{\frac{4}{3+m}}.
\end{equation}

%\begin{equation}
%\label{second order estimate}
%\|\int_{\R^3}|\xi|^2f_k\,d\xi\|_{L^{\infty}(0,T;L^{\frac{3+m}{5}}(\R^N))} \leq C_{N,T}(\|f_k\|_{L^{\infty}}+1)\left(\int_{\R^N\times\R^N}|\xi|^mf_k\,d\xi\,dx\right)^{\frac{5}{3+m}}.
%\end{equation}
On the other hand, by Lemma \ref{Lemma of energy law for regularized equation},  there exists a constant $K>0$, such that
\begin{equation}
\label{kinetic energy}
\int_{\R^3\times\R^3}(1+|\xi|^m)f_k\,d\xi\,dx< K e^{Kt},
\end{equation}
for any $0\leq t \leq T,$ where $K>0$ only depends on the initial data.

Thus, the right sides of \eqref{nk estimate}-\eqref{jk estimate} can be controlled by the initial data, thanks to \eqref{kinetic energy}. This yields  the kinetic density
\begin{equation}
\label{n Lp estimate}
\mathfrak{n}_k(t,x)\quad\text{is bounded in } L^{\infty}(0,T;L^r(\R^3))\;\;\text{ for any } 1\leq r\leq \frac{3+m}{3},
 \end{equation}
 and the kinetic current
 \begin{equation}
 \label{j Lp estimate}
 j_k(t,x)\quad\text{ is bounded in } L^{\infty}(0,T;L^s(\R^3))\;\;\text{ for any } 1\leq s\leq \frac{3+m}{4}.
 \end{equation}

%and\begin{equation} \label{second order  in Lp}
%\int_{\R^3}|\xi|^2f_k\,d\xi \quad\text{ is bounded in } L^{\infty}(0,T;L^t(\R^3))\;\;\text{ for any } 1\leq t\leq \frac{3+m}{5}.
%\end{equation}

\vskip0.3cm

Thus, we have proved the following proposition on the existence of  solution to initial value problem \eqref{regularized equation} in this section.
\begin{Proposition}
\label{prop regularized equation}
For any given $\varepsilon>0$, $k\geq0$ and $T>0$, under the assumption of Theorem \ref{main result},  there exists a smooth solution to initial value problem \eqref{regularized equation} which is given by \eqref{regularized solution}. Moreover, the solution satisfies the energy equality \eqref{energy law for regularized equation}, and the estimates of \eqref{Lp bound of f}, \eqref{m order estimate}, \eqref{n Lp estimate}, and \eqref{j Lp estimate}.
\end{Proposition}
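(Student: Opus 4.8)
The plan is to prove Proposition \ref{prop regularized equation} by induction on $k$, assembling the representation formula of this section with the three estimates already established. For $k=0$ there is nothing to prove, since $f_0\equiv0$. Suppose $f_{k-1}$ has been constructed and is smooth, nonnegative and satisfies all the asserted bounds. First I would record the elementary facts about the mollified datum: because the convolution $h_\varepsilon=h*\eta_\varepsilon$ acts only in the $x$ variable, $f^0_\varepsilon$ is smooth in $x$, $\|f^0_\varepsilon\|_{L^\infty}\le\|f^0\|_{L^\infty}$, $\|f^0_\varepsilon\|_{L^1}\le\|f^0\|_{L^1}$, and, by Fubini, $\int_{\R^3\times\R^3}(1+|\xi|^m)f^0_\varepsilon\,d\xi\,dx=\int_{\R^3\times\R^3}(1+|\xi|^m)f^0\,d\xi\,dx<\infty$ for every $m\in[0,m_0]$, so the hypotheses of Theorem \ref{main result} are transferred to $f^0_\varepsilon$. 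With $f_{k-1}$ frozen, \eqref{regularized equation} is a linear transport equation; integrating \eqref{ode} gives the characteristics $x(t)=x+t\xi$, along which it reduces to the linear ODE \eqref{ODE system}, whose unique solution is \eqref{regularized solution}. Since $T\ge0$, $f_{k-1}\ge0$, $f^0_\varepsilon\ge0$ and the exponential factors are positive, every term in \eqref{regularized solution} is nonnegative, hence $f_k\ge0$; smoothness of $f_k$ is inherited from that of $f^0_\varepsilon$ and of the coefficient $\mathfrak{n}_{k-1}(t,x)=\int_{\R^3}f_{k-1}\,d\xi$ through the explicit formula \eqref{regularized solution}.

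Next I would chain the uniform estimates in the order they appear in Lemmas \ref{Lemma bounds on fk}, \ref{Lemma of energy law for regularized equation}, \ref{Lemma of n-j}. Inserting \eqref{probability function-2} into \eqref{regularized solution} and applying the variant Gronwall inequality of Lemma \ref{Gronwall inequality} gives the $L^\infty(0,T;L^\infty)$ bound; integrating \eqref{ODE system} over $(x,\xi)$ and using \eqref{same speed}, \eqref{probability function} and Fubini to identify the gain term with $\lambda\int f_{k-1}\,d\xi\,dx$ gives, via Lemma \ref{Gronwall inequality} again, the $L^\infty(0,T;L^1)$ bound; interpolating yields \eqref{Lp bound of f}. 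Multiplying \eqref{ODE system} by $1+|\xi|^m$, integrating, and using \eqref{same speed}, \eqref{probability function} and Fubini to collapse the gain term produces the energy identity \eqref{energy law for regularized equation}, and a third application of Lemma \ref{Gronwall inequality} gives the moment bound \eqref{m order estimate} for every $m\in[0,m_0]$. Finally, feeding \eqref{m order estimate} and the $L^\infty$ bound of \eqref{Lp bound of f} into Lemma \ref{Lemma of n-j} with $p=m$ and $N=3$ gives the kinetic density and current estimates \eqref{n Lp estimate}, \eqref{j Lp estimate}. This closes the induction.

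The hard part will not be any single computation but the two places where uniformity and rigor have to be watched. First, each of the three invocations of Lemma \ref{Gronwall inequality} must be checked to yield a constant $K$ independent of $k$: the additive constant is controlled by the fixed datum $f^0_\varepsilon$, the multiplicative constant by $\lambda$ and $\bar K$ through \eqref{probability function-2}, and $\sup_{[0,T]}a_0=0$ because $f_0\equiv0$; only then do \eqref{Lp bound of f}, \eqref{m order estimate}, \eqref{n Lp estimate}, \eqref{j Lp estimate} hold with $k$-uniform constants, which is precisely what the proposition and the limiting argument of the next section need. Second, the Fubini interchanges in the $(\xi,\xi')$-integrals, the differentiation under the integral sign, and the identification of the gain terms all have to be justified at each stage; this is where the inductive finiteness of $\int_{\R^3\times\R^3}(1+|\xi|^{m_0})f_{k-1}\,d\xi\,dx$ and the $L^\infty$ bound on $f_{k-1}$, together with the bound \eqref{probability function-2} on $\int T\,d\xi'$ and the normalization \eqref{probability function} on $\int T\,d\xi$, are used. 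Once these are in hand the proposition is exactly the conjunction of the lemmas of this section, so no further argument is required.
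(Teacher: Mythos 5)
Your proposal is correct and follows essentially the same route as the paper: the proposition is exactly the conjunction of the characteristics construction \eqref{regularized solution} with Lemmas \ref{Lemma bounds on fk}, \ref{Lemma of energy law for regularized equation} and \ref{Lemma of n-j}, each driven by the variant Gronwall inequality of Lemma \ref{Gronwall inequality} and the kernel bound \eqref{probability function-2}. Your added attention to the $k$-uniformity of the Gronwall constants and to the transfer of the hypotheses to the mollified datum $f^0_\varepsilon$ is consistent with, and slightly more careful than, the paper's presentation.
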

\begin{Remark}
The solution constructed in above Proposition \ref{prop regularized equation} is a smooth solution, it obeys the energy equality \eqref{energy law for regularized equation-12}.
\end{Remark}
%%%%%%%%%%%%%%%%%%%%%

\bigskip\bigskip

\section{Recover the weak solutions}
The proof of Theorem \ref{main result} will be developed in this current section. It relies on the
introduction of a regularized equation \eqref{regularized equation} (for which the existence of a
solutions is given in Proposition \ref{prop regularized equation}, in fact, it is a smooth solution for any given $k>0$ and $\varepsilon>0$). In order to  recover the weak solution to \eqref{kinetic equation}-\eqref{initial data}, we shall pass to the limits as $k$ goes to large and $\varepsilon$ tends to zero, show that the limit function is a weak solution to initial value problem \eqref{kinetic equation}-\eqref{initial data}.  Thus, we need some convergence on the function $f_{k,\varepsilon}(t,x,\xi)$,  the particles density (zero moment)
$\mathfrak{n}_{k,\varepsilon}(t,x)$
and the kinetic  current (first moment) $j_{k,\varepsilon}(t,x)$ in $L^p$ space for some $p>1.$  In the following subsections, we will handle the limits with respect to $k$ in Subsection 4.1 and pass to the limits with respect to $\varepsilon$ in Subsection 4.2.

\subsection{Passing to the limits as $k\to\infty$} In this subsection, we use $\{f_k\}_{k=0}^{\infty}$ to denote the sequence of solutions to \eqref{regularized equation} that constructed in Proposition \ref{prop regularized equation} for any fixed $\varepsilon>0$. Here we aim at passing to the limits as $k$ goes to infinity for any given $\varepsilon>0$.

\vskip0.3cm

The estimates of Lemma \ref{Lemma bounds on fk} and Lemma \ref{Lemma of energy law for regularized equation} are crucial ones in this subsection.
In fact, these solutions satisfy, for all $T>0$,
\begin{equation}
\begin{cases}
\label{a priori estimate on k}
&\|f_k\|_{L^{\infty}(0,T;L^p(\R^3\times\R^3))}\leq C,\quad\text{ for any } 1\leq p\leq +\infty,
\\&
\|\mathfrak{n}_k(t,x)\|_{L^{\infty}(0,T;L^{r}(\R^3))}\leq C,\;\;\text{ for any } 1\leq r\leq \frac{3+m}{3},
\\&
\|j_k(t,x)\|_{L^{\infty}(0,T;L^s(\R^3))}\leq C,\;\;\text{ for any } 1\leq s\leq \frac{3+m}{4},
%\\&\|\int_{\R^3}|\xi|^2f_k\,d\xi\|_{L^{\infty}(0,T;L^t(\R^3))}\leq C,\;\;\text{ for any } 1\leq t\leq \frac{3+m}{5},
\end{cases}
\end{equation}
where all $C>0$ only depend on the initial data.

\vskip0.3cm

First of all, by \eqref{a priori estimate on k}, it follows that there exists a function $f(t,x,\xi)$ such that
\begin{equation}
\label{weak convergence of f}
f_k\rightharpoonup f\quad\text{ weakly in }L^{\infty}(0,T;L^p(\R^3\times\R^3)),\;\;\text{ for any } p> 1.
\end{equation}
In particular, this limit function $f(t,x,\xi)$ is bounded in $L^{\infty}(0,T;L^{\infty}(\R^3\times\R^3).$\\

\vskip0.3cm

Due to nonlinear term $\mu(\mathfrak{n})f$ in \eqref{kinetic equation}, or the nonlinear term $\mu(\mathfrak{n}_{k-1})f_k$ in \eqref{regularized equation},
we shall study the strong convergence of $\mathfrak{n}_k$ and $j_k$ in some $L^p$ space in the following Lemma \ref{Lemma on strong convergence}.
\begin{Lemma} Let $f_k$ be a solution to \eqref{regularized equation} constructed in Proposition \ref{prop regularized equation}, then
\label{Lemma on strong convergence}

\begin{equation}
\label{first strong convergence}
 \mathfrak{n}_k\to \mathfrak{n} \;\text{ strongly in } L^{\infty}(0,T;L_{loc}^{r}(\R^3)),
\end{equation}
\begin{equation}
\label{second strong convergence}
j_k\to j  \;\text{ strongly in } L^{\infty}(0,T;L_{loc}^{s}(\R^3)),
\end{equation}
\begin{equation}
\label{strong convergence kinetic energy-1}
\int_{\R^3}(1+|\xi|^m)f_k\,d\xi\to \int_{\R^3}(1+|\xi|^m)f\,d\xi  \;\text{ strongly in } L^{\infty}(0,T;L_{loc}^{1}(\R^3)).
\end{equation}
\end{Lemma}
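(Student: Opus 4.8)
The plan is to combine the uniform $L^p$ bounds of \eqref{a priori estimate on k} with an $L^p$-version of the velocity-averaging lemma to upgrade the weak convergence \eqref{weak convergence of f} into strong convergence of the moments. The starting point is the observation that each $f_k$ solves a transport equation $(f_k)_t+\xi\cdot\nabla_x f_k = g_k$, where, by \eqref{regularized equation},
\[
g_k=-\mu(\mathfrak{n}_{k-1})f_k-\lambda f_k+\lambda\int_{\R^3}T(\xi,\xi')f_{k-1}(t,x,\xi')\,d\xi'.
\]
First I would check that $g_k$ is bounded in $L^\infty(0,T;L^p(\R^3\times\R^3))$ for some $p>1$: the terms $\lambda f_k$ and the gain term are controlled by Lemma \ref{Lemma bounds on fk} and \eqref{probability function-2}, while $\mu(\mathfrak{n}_{k-1})f_k$ is handled because $\mu$ is Lipschitz (hence $\mu(\mathfrak{n}_{k-1})$ grows at most linearly in $\mathfrak{n}_{k-1}$, which is bounded in $L^\infty(0,T;L^{(3+m)/3})$ by \eqref{n Lp estimate}) and $f_k\in L^\infty$. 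Then the $L^p$ regularity theorem for velocity averages (the variant attributed to DiPerna--Lions--Meyer / Lions--Perthame, as referenced via \cite{LP,H} in the excerpt) yields that for any $\psi\in C_c^\infty(\R^3_\xi)$ the averages $\int_{\R^3}f_k\,\psi(\xi)\,d\xi$ are precompact in $L^p_{loc}((0,T)\times\R^3)$, in fact with a uniform gain of fractional Sobolev regularity in $x$ and $t$.

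The second step is to remove the cutoff in $\xi$. Choose a smooth cutoff $\psi_R(\xi)$ equal to $1$ on $|\xi|\le R$ and supported in $|\xi|\le 2R$. Velocity averaging gives strong $L^r_{loc}$ convergence (along a subsequence) of $\int f_k\psi_R\,d\xi$ as $k\to\infty$; the tail $\int_{|\xi|\ge R}f_k\,d\xi$ is estimated, exactly as in \eqref{split n}, by $R^{-m}\int|\xi|^m f_k\,d\xi$, which is uniformly small in $L^\infty(0,T;L^{(3+m)/3})$ by \eqref{kinetic energy} and Lemma \ref{Lemma of n-j} once $R$ is large. A diagonal argument over $R\to\infty$ then upgrades this to strong convergence of the full moment $\mathfrak{n}_k\to\mathfrak{n}$ in $L^\infty(0,T;L^r_{loc}(\R^3))$ for $r<(3+m)/3$, giving \eqref{first strong convergence}. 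The same scheme with the weight $\xi\psi_R(\xi)$ (whose average is again controlled by velocity averaging, and whose tail is bounded by $R^{-(m-1)}\int|\xi|^m f_k\,d\xi$) gives \eqref{second strong convergence}, and with the weight $(1+|\xi|^m)\psi_R(\xi)$, noting that the tail $\int_{|\xi|\ge R}|\xi|^m f_k\,d\xi$ needs a moment of order strictly above $m$, hence is where the hypothesis $m\in[0,m_0]$ with $m_0\ge 3$ (providing a margin $\int|\xi|^{m_0}f_k$) is used — gives \eqref{strong convergence kinetic energy-1}.

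The main obstacle I anticipate is the tail control for the top-order moment \eqref{strong convergence kinetic energy-1}: unlike $\mathfrak{n}_k$ and $j_k$, the quantity $\int(1+|\xi|^m)f_k\,d\xi$ is itself ``at the edge'' of the available integrability, so to make $\int_{|\xi|\ge R}|\xi|^m f_k\,d\xi$ uniformly small one must invoke a strictly higher moment bound, i.e. apply Lemma \ref{Lemma of energy law for regularized equation} with some exponent $m'\in(m,m_0]$ and interpolate. A secondary technical point is that velocity averaging as usually stated requires the source $g_k$ to lie in $L^p$ with $p>1$ jointly in all variables; checking that $\mu(\mathfrak{n}_{k-1})f_k$ genuinely satisfies this — rather than merely being in $L^1$ — is exactly where the Lipschitz hypothesis on $\mu$ together with the $L^{(3+m)/3}$ bound on $\mathfrak{n}_{k-1}$ and $m\ge 3$ must be used carefully (one gets $\mu(\mathfrak{n}_{k-1})\in L^\infty_t L^{(3+m)/3}_x$ and multiplies by $f_k\in L^\infty$). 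Once these two points are dispatched, the extraction of a common subsequence realizing all three convergences simultaneously, and the identification of the limits with $\mathfrak{n}$, $j$, and $\int(1+|\xi|^m)f\,d\xi$ (using \eqref{weak convergence of f}), is routine.
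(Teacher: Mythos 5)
Your strategy coincides with the paper's: regard \eqref{regularized equation} as free transport with a source, invoke the DiPerna--Lions--Meyer velocity-averaging lemma \cite{DLM} to get compactness of the truncated averages $\int f_k\,\phi(\xi)\,d\xi$ for compactly supported $\phi$, and then use the uniform bounds \eqref{a priori estimate on k} and the moment estimate of Lemma \ref{Lemma of energy law for regularized equation} to pass to the full moments $\mathfrak{n}_k$, $j_k$, $\int(1+|\xi|^m)f_k\,d\xi$ and to strengthen the convergence to the spaces $L^{\infty}(0,T;L^r_{loc})$. You are in fact more explicit than the paper on the cutoff-removal step: the paper jumps from compactness of $\int f_k\phi\,d\xi$, $\phi\in\mathfrak{D}(\R^3)$, to a.e.\ convergence of the unweighted and weighted full moments without writing the tail estimates, whereas you supply them, and you correctly observe that the tail of the top-order moment in \eqref{strong convergence kinetic energy-1} requires a moment of order strictly larger than $m$ (so one must take $m<m_0$, or propagate an extra moment) --- a point the paper passes over in silence.

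The one substantive divergence is the treatment of the damping term, and there your claim does not follow from the estimates you cite. You want $\mu(\mathfrak{n}_{k-1})f_k$ bounded in $L^{\infty}(0,T;L^p(\R^3\times\R^3))$ for some $p>1$ by combining $\mu(\mathfrak{n}_{k-1})\in L^{\infty}_tL^{(3+m)/3}_x$ with $f_k\in L^{\infty}$. But $L^{(3+m)/3}_x\otimes L^{\infty}_{x,\xi}$ gives no integrability in $\xi$; reducing via $f_k^p\le\|f_k\|_{L^\infty}^{p-1}f_k$ leads to $\int_{\R^3}\mathfrak{n}_{k-1}^p\,\mathfrak{n}_k\,dx$, and when $m_0=3$ (the borderline case permitted by Theorem \ref{main result}) the densities are controlled only in $L^\infty_t(L^1\cap L^2)(\R^3)$, so H\"older closes this integral only for $p\le 1$. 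Thus the $L^p$, $p>1$, bound on the damping source fails in general under the stated hypotheses. The paper takes a different tack at exactly this point: it bounds $\mu(\mathfrak{n}_{k-1})f_k$ only in $L^1(0,T;L^1(\R^3\times\R^3))$ (estimate \eqref{Q1 in L^1}), obtained from the dissipation term in the energy identity \eqref{energy damping}, keeps the $L^p$ control \eqref{Q2 in L infty} only for the $\mathfrak{Q}_2$ part of the source, and then appeals to the averaging lemma with this split source. (Strictly speaking the paper's route is also delicate, since velocity averaging with a source merely bounded in $L^1$ needs an equi-integrability hypothesis; but it is the route the paper takes.) To repair your argument you should either adopt the paper's splitting and use an $L^1$-compactness version of averaging for the damping part, or assume $m_0>3$ so that $\mathfrak{n}_k$ is bounded in $L^q$ for some $q>2$ and your H\"older step closes for $p$ close to $1$.
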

\begin{proof} Thanks to \eqref{m order estimate} with $m=2$,
 $f_k$ is bounded in $$L^{\infty}(0,T;L^1(\R^3\times\R^3),1+|\xi|^2).$$
By \eqref{m order estimate} and \eqref{energy law for regularized equation-12} for any $m\geq 1$, one obtains
\begin{equation}
\label{energy damping}
\int_0^T\int_{\R^3\times\R^3}(1+|\xi|^m)\mu(\mathfrak{n}_{k-1})f_k\,d\xi\,d\x\,dt\leq C.
\end{equation}
This yields
\begin{equation}
\label{Q1 in L^1}
\mu(\mathfrak{n}_{k-1})f_k \;\text{ is bounded in } L^1(0,T;L^1(\R^3\times\R^3)).
\end{equation}
Meanwhile, the Fubini's Theorem gives us
\begin{equation}
\label{Q2 in L infty}
-\lambda f_k+\lambda\int T(\xi,\xi')f_{k-1}(t,x,\xi')\,d\xi' \text{ is bounded in } \; L^{\infty}(0,T;L^p(\R^3\times\R^3)),
\end{equation}
thanks to \eqref{a priori estimate on k}.

 By \eqref{Q1 in L^1} and \eqref{Q2 in L infty}, we deduce from Diperna-Lions-Meyer' $L^p$ regularity of average velocities, (see \cite{DLM}), for each $\phi(\xi)\in \mathfrak{D}(\R^3)$, $$\left\{\int_{\R^3} f^k\phi(\xi)\,d\xi\right\}_{k}$$ is relatively compact in $L^1(0,T;L^1(B_R)),$
for any $R<+\infty.$ In particular, we find
\begin{equation*}
\mathfrak{n}_k=\int_{\R^3}f^k\,d\xi\to \mathfrak{n}=\int_{\R^3}f\,d\xi,\;\;\text{a.e.}
\end{equation*}

\begin{equation*}
j_k=\int_{\R^3}\xi f^k\,d\xi\to j=\int_{\R^3}\xi f\,d\xi,\;\;\text{a.e.}
\end{equation*}
and
\begin{equation}
\label{convergence m order}
\int_{\R^3}(1+|\xi|^m)f_k\,d\xi\to \int_{\R^3}(1+|\xi|^m)f\,d\xi  \;\text{a.e.}
\end{equation}
Thanks to the estimates of \eqref{a priori estimate on k} for $\mathfrak{n}_k$ and $j_k$, we strength the above convergence of $\mathfrak{n}_k$ and $j_k$ as follows

\begin{equation*}
\label{n strong convergence}
 \mathfrak{n}_k\to \mathfrak{n} \;\text{ strongly in } L^{\infty}(0,T;L^{r}_{loc}(\R^3)),
\end{equation*}
and
\begin{equation*}
\label{j strong convergence}
j_k\to j  \;\text{ strongly in } L^{\infty}(0,T;L^{s}_{loc}(\R^3)).
\end{equation*}
Using \eqref{kinetic energy}, we  strength the above convergence of $
\int_{\R^3}(1+|\xi|^m)f_k\,d\xi$ as follows
\begin{equation*}
\label{strong convergence kinetic energy}
\int_{\R^3}(1+|\xi|^m)f_k\,d\xi\to \int_{\R^3}(1+|\xi|^m)f\,d\xi  \;\text{ strongly in } L^{\infty}(0,T;L^{1}_{loc}(\R^3)).
\end{equation*}
\end{proof}
\vskip0.3cm

With \eqref{n strong convergence} at hand, we are ready  to show  the convergence of $\mu(\mathfrak{n}_{k-1})\mathfrak{n}_k$ in the sense of distributions on $((0,T)\times\R^3)$. We address it
 in the following lemma.
\begin{Lemma}
\label{Lemma of Q1}
If $f_k$ is given by \eqref{regularized solution}, then \begin{equation}
\label{Q1 convergence}
\int_{\R^3}\mu(\mathfrak{n}_{k-1})f_k\,d \xi\to \int_{\R^3}\mu(\mathfrak{n})f\,d \xi
\end{equation}
in $L^{\infty}(0,T;L_{loc}^{\frac{r}{2}}(\R^3))$
as $k\to\infty.$ Moreover, if $m\geq 3,$ then $\frac{r}{2}\geq 1.$
\end{Lemma}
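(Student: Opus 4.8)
The plan is to split the difference $\int_{\R^3}\mu(\mathfrak{n}_{k-1})f_k\,d\xi - \int_{\R^3}\mu(\mathfrak{n})f\,d\xi$ into a term carrying the nonlinearity in the coefficient and a term carrying the weak convergence of $f_k$, namely
\begin{equation*}
\begin{split}
\int_{\R^3}\mu(\mathfrak{n}_{k-1})f_k\,d\xi - \int_{\R^3}\mu(\mathfrak{n})f\,d\xi
&= \int_{\R^3}\bigl(\mu(\mathfrak{n}_{k-1})-\mu(\mathfrak{n})\bigr)f_k\,d\xi
\\&\quad + \int_{\R^3}\mu(\mathfrak{n})\bigl(f_k - f\bigr)\,d\xi.
\end{split}
\end{equation*}
For the first term I would use that $\mu$ is Lipschitz, so it is pointwise bounded in $x$ by $\|\mu'\|_{L^\infty}|\mathfrak{n}_{k-1}-\mathfrak{n}|\cdot\mathfrak{n}_k$; then Hölder in $x$ (exponents $r$ and $r'$, or more simply $r$ and $r$ together with the fact that $\mathfrak{n}_k$ is bounded in $L^\infty(0,T;L^r_{loc})$) together with the strong convergence $\mathfrak{n}_{k-1}\to\mathfrak{n}$ in $L^\infty(0,T;L^r_{loc}(\R^3))$ from Lemma~\ref{Lemma on strong convergence} forces this term to $0$ in $L^\infty(0,T;L^{r/2}_{loc})$. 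One has to be slightly careful that the index shift $k-1$ does not matter, which is immediate since the whole sequence $\{\mathfrak{n}_k\}$ converges to the same limit.

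For the second term I would argue that $\mu(\mathfrak{n})$ is a fixed function in $L^\infty(0,T;L^r_{loc}(\R^3))$ (again by the Lipschitz bound $|\mu(\mathfrak{n})|\le |\mu(0)| + \|\mu'\|_\infty \mathfrak{n}$ and the bound on $\mathfrak n$), while $f_k\rightharpoonup f$ weakly in every $L^\infty(0,T;L^p)$, hence $\int_{\R^3}(f_k-f)\,d\xi=\mathfrak{n}_k-\mathfrak{n}\rightharpoonup 0$; combining the strong convergence of $\mathfrak n_k$ established in the previous lemma with the fixed $L^r_{loc}$ weight, a product of a strongly convergent and a weakly convergent factor converges, giving this term $\to 0$ in $L^{r/2}_{loc}$ as well. (In fact the strong convergence of $\mathfrak{n}_k$ itself already handles this term directly, since the second term equals $\int \mu(\mathfrak n)(\mathfrak n_k - \mathfrak n)$ when one only tracks zeroth moments.) Putting the two pieces together yields \eqref{Q1 convergence} in $L^\infty(0,T;L^{r/2}_{loc}(\R^3))$.

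Finally, for the claim that $\frac r2\ge 1$ when $m\ge 3$: recall from \eqref{n Lp estimate} that $r$ ranges in $[1,\tfrac{3+m}{3}]$, so taking $r=\tfrac{3+m}{3}$ we get $\tfrac r2=\tfrac{3+m}{6}\ge 1$ exactly when $m\ge 3$; this is a one-line arithmetic check. The main obstacle I anticipate is the bookkeeping of the Hölder exponents in the first term — one needs that the product $|\mathfrak n_{k-1}-\mathfrak n|\,\mathfrak n_k$ lives in $L^{r/2}_{loc}$ uniformly, which works because $\tfrac{1}{r/2}=\tfrac1r+\tfrac1r$ and both factors are bounded in $L^\infty(0,T;L^r_{loc})$ with one of them converging strongly to $0$; localizing in $x$ (the "$loc$") is what makes the constant functions and the Hölder step legitimate, and one should be explicit that all constants are uniform in $k$.
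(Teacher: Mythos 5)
Your proposal is correct and follows essentially the same route as the paper: the paper also reduces the statement to $\mu(\mathfrak{n}_{k-1})\mathfrak{n}_k\to\mu(\mathfrak{n})\mathfrak{n}$ (since $\mu(\mathfrak{n}_{k-1})$ is independent of $\xi$), uses the Lipschitz property of $\mu$ together with the strong convergence $\mathfrak{n}_{k-1}\to\mathfrak{n}$ in $L^{\infty}(0,T;L^{r}_{loc})$ to get $\mu(\mathfrak{n}_{k-1})\to\mu(\mathfrak{n})$ strongly in the same space, and then passes to the product via H\"older in $L^{r/2}_{loc}$, with the same arithmetic check that $r/2\geq 1$ when $m\geq 3$. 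Your write-up merely makes the product decomposition and the exponent bookkeeping explicit, which the paper leaves implicit.
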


\begin{proof}

Firstly, \begin{equation}
\label{damping different form}
\begin{split}&
\int_{\R^3\times\R^3}\mu(\mathfrak{n}_{k-1})f_k\,d\xi\,d x=\int_{\R^3}\mu(\mathfrak{n}_{k-1})\mathfrak{n}_k\,dx,
\\&\int_{\R^3\times\R^3}\mu(\mathfrak{n})f\,d\xi\,d x=\int_{\R^3}\mu(\mathfrak{n})\mathfrak{n}\,dx.
\end{split}
\end{equation}

Since $\mu(\cdot)$ is a Liptschitz function and  \eqref{first strong convergence}, then
\begin{equation}
\label{strong convergence of mu}
\mu(\mathfrak{n}_{k-1})\to \mu(\mathfrak{n})\;\text{strongly in } L^{\infty}(0,T;L^r_{loc}(\R^3)).
\end{equation}
Thanks to  \eqref{first strong convergence}, \eqref{damping different form} and \eqref{strong convergence of mu}, we find, for any $m\geq 3,$
\begin{equation*}
\label{operator convergence}
\mu(\mathfrak{n}_{k-1})\mathfrak{n}_k\to \mu(\mathfrak{n})\mathfrak{n}
\end{equation*}
in $L^{\infty}(0,T;L_{loc}^{\frac{r}{2}}(\R^3))$
as $k\to\infty,$ and $\frac{r}{2}\geq 1.$
\end{proof}

\vskip0.3cm
To pass to the limits with respect to $k$, we still need the following lemma on the convergence of $$\mathfrak{Q}_2(f_k)=-\lambda f_k+\lambda\int_{\R^3} T(\xi,\xi')f_{k-1}(t,x,\xi')\,d\xi'$$ in some $L^p$ space.

\begin{Lemma}
Let $f_k$ be given by \eqref{regularized solution}, then
$\mathfrak{Q}_2(f_k)$ is bounded in $$L^{\infty}(0,T;L^p(\R^3\times\R^3)$$ for any $p\geq 1,$
and
\begin{equation}
\label{convergence of Q2}
\begin{split}&\mathfrak{Q}_2(f_k)\to \mathfrak{Q}_2(f)\quad\text
{ weakly in } L^{\infty}(0,T;L^p(\R^3\times\R^3))\;\;\text{ for any } p>1,
\end{split}
\end{equation}
 as $k\to\infty.$
\end{Lemma}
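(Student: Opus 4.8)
The plan is to split $\mathfrak{Q}_2(f_k)$ into its two constituent pieces, $-\lambda f_k$ and $\lambda\int_{\R^3}T(\xi,\xi')f_{k-1}(t,x,\xi')\,d\xi'$, and treat each separately, since the boundedness and weak convergence of the first piece already follow from what we have in hand. Indeed, by Lemma \ref{Lemma bounds on fk}, $f_k$ is bounded in $L^{\infty}(0,T;L^p(\R^3\times\R^3))$ for every $p\geq 1$, and the weak convergence \eqref{weak convergence of f} gives $-\lambda f_k \rightharpoonup -\lambda f$ weakly in $L^{\infty}(0,T;L^p(\R^3\times\R^3))$ for every $p>1$. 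So the whole issue is reduced to the gain term $g_k(t,x,\xi):=\int_{\R^3}T(\xi,\xi')f_{k-1}(t,x,\xi')\,d\xi'$.

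For the boundedness of $g_k$, I would use Minkowski's integral inequality together with the kernel bounds from the two lemmas in Section 2: \eqref{probability function} controls $\int_{\R^3}T(\xi,\xi')\,d\xi'$, wait — more precisely, to bound $\|g_k(t,\cdot,\cdot)\|_{L^p_{x,\xi}}$ one writes $g_k$ as a superposition over $\xi'$ of the functions $\xi\mapsto T(\xi,\xi')f_{k-1}(t,x,\xi')$, and estimates using $\int_{\R^3}T(\xi,\xi')\,d\xi\le 1$ from \eqref{probability function} in the $\xi$-variable and $\int_{\R^3}T(\xi,\xi')\,d\xi'\le\bar K$ from \eqref{probability function-2} in the $\xi'$-variable. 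This yields $\|g_k\|_{L^p_{x,\xi}}\le C(\bar K)\|f_{k-1}\|_{L^p_{x,\xi}}$, hence $g_k$ is bounded in $L^{\infty}(0,T;L^p(\R^3\times\R^3))$ for every $p\ge1$, and therefore so is $\mathfrak{Q}_2(f_k)$. (Alternatively, since $f_{k-1}$ is uniformly bounded in $L^{\infty}$, the $L^{\infty}$ bound on $g_k$ is immediate from \eqref{probability function-2}, and interpolation with the $L^1$ bound gives all intermediate $L^p$.)

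For the weak convergence, the linear map $\Lambda: h \mapsto \int_{\R^3}T(\xi,\xi')h(t,x,\xi')\,d\xi'$ is bounded on $L^{\infty}(0,T;L^p(\R^3\times\R^3))$ by the previous step, and its (pre-)adjoint acting on test functions $\varphi$ is the map $\varphi \mapsto \int_{\R^3}T(\xi',\xi)\varphi(t,x,\xi')\,d\xi'$, which by the same kernel estimates sends $L^{q}$ into $L^{q}$ (Hölder-conjugate exponent); in particular it maps any reasonable test class into $L^{q}_{x,\xi}$. Since $f_{k-1}\rightharpoonup f$ weakly in $L^{\infty}(0,T;L^p(\R^3\times\R^3))$ — the index shift $k-1\to k$ being harmless — testing $g_k$ against a fixed $\varphi$ and using Fubini to move $T$ onto $\varphi$ converts the pairing into $\langle f_{k-1}, \Lambda^{*}\varphi\rangle$, which converges to $\langle f, \Lambda^{*}\varphi\rangle = \langle \Lambda f, \varphi\rangle$. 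Hence $g_k\rightharpoonup \int_{\R^3}T(\xi,\xi')f(t,x,\xi')\,d\xi'$ weakly, and combining with the $-\lambda f_k$ term gives \eqref{convergence of Q2}.

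The main obstacle I anticipate is purely a matter of bookkeeping rather than depth: making the duality argument fully rigorous in the $L^{\infty}_t L^p_{x,\xi}$ setting (which is not reflexive), so one should either phrase the weak convergence against $L^1_t$-in-time times $L^{q}$-in-$(x,\xi)$ test functions, or simply work with weak convergence in $L^p((0,T)\times\R^3\times\R^3)$ for finite $p$ and then upgrade. One must also be careful that $\Lambda^{*}$ genuinely maps test functions into the correct dual space — this is where the bound $\int T(\xi',\xi)\,d\xi'\le \bar K$ (with the roles of the two velocity variables as needed) is used, so verifying the kernel integrability in the correct variable is the one place to proceed with care.
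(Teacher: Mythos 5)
Your proposal is correct and follows essentially the same route as the paper: the same loss/gain decomposition, the same use of $\int_{\R^3}T(\xi,\xi')\,d\xi=1$ and $\int_{\R^3}T(\xi,\xi')\,d\xi'\leq\bar K$ to get the $L^1$ and $L^\infty$ (hence all $L^p$) bounds on the gain term, and the same Fubini trick of moving the kernel onto the test function to deduce weak convergence from \eqref{weak convergence of f}. If anything, your duality formulation with $\Lambda^{*}$ is slightly more careful than the paper's, which only tests against functions $\varphi(x)$ of $x$ alone; your concerns about the non-reflexivity of $L^{\infty}(0,T;L^p)$ are legitimate but are glossed over in the paper as well.
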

\begin{proof}
The proof follows the same line as in  \cite{LV} and we sketch it just for sake of completeness.
We estimate
\begin{equation}
\label{L infty bound of Q2}
\begin{split}
\|\mathfrak{Q}_2(f_k)\|_{L^{\infty}}&\leq \lambda\|f_k\|_{L^{\infty}}+\lambda\int_{\R^3} T(\xi,\xi')\,d\xi'\|f_{k-1}\|_{L^{\infty}}
\\&\leq \lambda\|f_k\|_{L^{\infty}}+\lambda K\|f_{k-1}\|_{L^{\infty}},
\end{split}
\end{equation}
and \begin{equation*}
\begin{split}
&\int_{\R^3\times\R^3}|\mathfrak{Q}_2(f_k)|\,d\xi\,dx=\int_{\R^3\times\R^3}\left|-\lambda f_k+\lambda\int T(\xi,\xi')f_{k-1}(t,x,\xi')\,d\xi'\right|\,d\xi\,dx
\\&\leq \lambda\int_{\R^3\times\R^3}f_k\,d\xi\,dx+\lambda\int_{R^3\times\R^3\times\R^3}T(\xi,\xi')f_{k-1}(t,x,\xi')\,d\xi'\,d\xi\,dx
\\& \leq \lambda\int_{\R^3\times\R^3}f_k\,d\xi\,dx+\lambda\int_{\R^3\times\R^3}f_{k-1}\,d\xi\,dx,
\end{split}
\end{equation*}
where we used $f_k(t,x,\xi)\geq 0$ for all $k$ and \eqref{probability function}. Thus,
\begin{equation}
\label{L1 bound of Q2}
\|\mathfrak{Q}_2(f_k)\|_{L^{\infty}(0,T;L^1(\R^3\times\R^3))}\leq C\|f_{k}\|_{L^{\infty}(0,T;L^1(\R^3\times\R^3))}.
\end{equation}
Using \eqref{L infty bound of Q2} and \eqref{L1 bound of Q2},
$\mathfrak{Q}_2(f_k)$ is bounded in $$L^{\infty}(0,T;L^p(\R^3\times\R^3)$$ for any $p\geq 1.$

For any $\varphi(x)\in L^{p}(0,T;L^q(\R^3))$, we consider
\begin{equation}
\label{the second part of Q2}
\begin{split}
&\int_{\R^3\times\R^3}\left(\int T(\xi,\xi')f_{k-1}(t,x,\xi')\,d\xi'-\int T(\xi,\xi')f(t,x,\xi')\,d\xi'\right)\varphi(x)\,d\xi\,dx\\
&=\int_{\R^3\times\R^3\times\R^3}T(\xi,\xi')\left(f_{k-1}(t,x,\xi')-f(t,x,\xi')\right)\varphi(x)\,d\xi'\,d\xi\,dx
\\&=\int_{\R^3\times\R^3}\left(f_{k-1}(t,x,\xi)-f(t,x,\xi)\right)\varphi(x)\,d\xi\,dx\to 0
\end{split}
\end{equation}
as $k\to\infty,$
thanks to \eqref{weak convergence of f}.

With the help of \eqref{weak convergence of f}, \eqref{L1 bound of Q2} and \eqref{the second part of Q2}, $\mathfrak{Q}_2(f_k)$ converges to $\mathfrak{Q}(f)$ weakly in
$$L^{\infty}(0,T;L^p(\R^3\times\R^3))$$ for any $p> 1.$
\end{proof}

\vskip0.3cm

Turning to the next issue, the smooth solution $f_k$ of \eqref{regularized equation} satisfies the following weak formulation
\begin{equation}
\begin{split}
\label{weak formulation on k}
%\\& \text{For any test function } \varphi\in C^{\infty}(\R^3\times\R^3\times[0,T]), \;\;\text{ we have }\\&
&\int_{\R^3\times\R^3}f_{\varepsilon}^0\varphi(0,x,\xi)\,d\xi\,dx+\int_0^T\int_{\R^3\times\R^3}f_k\varphi_t\,d\xi\,dx\,dt+\xi f_k\cdot\nabla_x\varphi\,d\xi\,dx\,dt+
\\&\int_0^T\int_{\R^3\times\R^3}\mu(\mathfrak{n}_{k-1})f_k\varphi\,d\xi\,dx\,dt=-\lambda\int_0^T\int_{\R^3\times\R^3}f_k\varphi\,d\xi\,dx\,dt
\\&+\lambda\int_0^T\int_{\R^3\times\R^3\times\R^3}T(\xi,\xi')f_{k-1}(t,x,\xi')\varphi\,d\xi\,dx\,dt,
\end{split}
\end{equation}
where $$\mathfrak{n}_{k-1}=\int_{\R^3}f_{k-1}\,d\xi,$$
and $\varphi\in C^{\infty}(\R^3\times\R^3\times[0,T])$ is any test function.

Letting $k$ tends to infinity in \eqref{weak formulation on k}, using the above convergence, in particularly, by \eqref
{weak convergence of f}, \eqref
{first strong convergence}, \eqref{second strong convergence}, \eqref{Q1 convergence}, \eqref{convergence of Q2}, one obtains
\begin{equation}
\begin{split}
\label{weak formulation k}
%\\& \text{For any test function } \varphi\in C^{\infty}(\R^3\times\R^3\times[0,T]), \;\;\text{ we have }\\&
&\int_{\R^3\times\R^3}f_{\varepsilon}^0\varphi(0,x,\xi)\,d\xi\,dx+\int_0^T\int_{\R^3\times\R^3}f\varphi_t+\xi f\cdot\nabla_x\varphi\,d\xi\,dx\,dt+
\\&\int_0^T\int_{\R^3\times\R^3}\mu(\mathfrak{n})f\varphi\,d\xi\,dx\,dt=-\lambda\int_0^T\int_{\R^3\times\R^3}f\varphi\,d\xi\,dx\,dt
\\&+\lambda\int_0^T\int_{\R^3\times\R^3\times\R^3}T(\xi,\xi')f(t,x,\xi')\varphi\,d\xi\,dx\,dt,
\end{split}
\end{equation}
and $$\mathfrak{n}=\int_{\R^3}f\,d\xi.$$

\vskip0.3cm

Concerning the energy inequality, it is reasonable to expect that
a priori estimate
we present can be further developed. Taking integration on both sides of \eqref{energy law for regularized equation-12} with respect to
$t$, one obtains
\begin{equation}
\label{energy law with m=2}
\begin{split}
&\int_{\R^3\times\R^3}(1+|\xi|^m)f_k\,d\xi\,dx+\int_0^t\int_{\R^3\times\R^3}\mu(\mathfrak{n}_{k-1})(1+|\xi|^m)f_k\,d\xi\,dx\,dt
\\&\quad\quad\quad\quad\quad\quad\quad\quad\quad\quad\quad=-\lambda\int_0^t\int_{\R^3\times\R^3}(1+|\xi|^m)f_k\,d\xi\,dx\,dt
\\&+
\lambda\int_0^t\int_{\R^3\times\R^3}f_{k-1}(1+|\xi|^m)\,d\xi\,dx\,dt+\int_{\R^3\times\R^3}(1+|\xi|^m)f^0_{\varepsilon}\,d\xi\,dx.
\end{split}
\end{equation}
Thanks to \eqref{strong convergence kinetic energy-1} and \eqref{convergence m order}, the Fatou's Lemma yields
\begin{equation*}
\int_{\R^3\times\R^3}(1+|\xi|^m)f\,d\xi\,dx\leq \lim_{k}\inf\int_{\R^3\times\R^3} (1+|\xi|^m)f_k\,d\xi\,dx.
\end{equation*}

%Notice that $\mu(\cdot)$ is a Lipschitz function,
%with the help of  \eqref{first strong convergence}, \eqref{strong convergence kinetic energy-1} and \eqref{energy damping},
%we have
%\begin{equation}
%\label{damping for energy}
%\int_0^t\int_{\R^3\times\R^3}\mu(\mathfrak{n}_{k-1})(1+|\xi|^m)f_k\,d\xi\,dx\,dt\to \int_0^t\int_{\R^3\times\R^3}\mu(\mathfrak{n})(1+|\xi|^m)f\,d\xi\,dx\,dt.
%\end{equation}
%It allows us to apply the Fatou's Lemma, then
%\begin{equation*}
%\int_0^t\int_{\R^3\times\R^3}\mu(\mathfrak{n})(1+|\xi|^m)f\,d\xi\,dx\,dt\leq
%\lim_{k}\inf\int_0^t\int_{\R^3\times\R^3}\mu(\mathfrak{n}_{k-1})(1+|\xi|^m)f_k\,d\xi\,dx\,dt.
%\end{equation*}

By \eqref{m order estimate} and \eqref{convergence m order},
 the sum of the first two terms on the right side of \eqref{energy law with m=2} is zero.

Letting $k$ goes to infinite in \eqref{energy law with m=2},
 we have
 \begin{equation*}
\begin{split}
\int_{\R^3\times\R^3}(1+|\xi|^m)f\,d\xi\,dx&\leq \int_{\R^3\times\R^3}(1+|\xi|^m)f^0_{\varepsilon}\,d\xi\,dx,
\end{split}
\end{equation*}
for any $0\leq t\leq T.$

Thus, we have proved the following existence of weak solutions in this subsection by letting $k$ tends to infinity.
\begin{Proposition}
\label{prop k goes to large}
For any given $\varepsilon>0$, under assumption of Theorem \ref{main result}, there exists a weak solution for any $T>0$ to the following initial value problem
\begin{equation}
\label{regularized equation after k goes to large}
\begin{cases}
&f_t+\xi\cdot\nabla f=-\mu(\mathfrak{n})f-\lambda f+\lambda\int T(\xi,\xi')f(t,x,\xi')\,d\xi',
\\& f(0,x,\xi)=f^0_{\varepsilon}(x,\xi),
\\& \mathfrak{n}=\int_{\R^3}f\,d\xi,
\end{cases}
\end{equation}
where $h_{\varepsilon}(x)=h*\eta_{\varepsilon}(x).$
Moveover, the weak solution $f(t,x,\xi)$ has the following energy inequality
 \begin{equation}
\begin{split}
\label{energy law after k}
\int_{\R^3\times\R^3}(1+|\xi|^m)f\,d\xi\,dx \leq \int_{\R^3\times\R^3}(1+|\xi|^m)f^0_{\varepsilon}\,d\xi\,dx,
\end{split}
\end{equation}
for any $0\leq t\leq T,$
and \begin{equation*}
\label{bounded in Lp for f}
\|f\|_{L^{\infty}(0,T;L^p(\R^3\times\R^3))}\leq C,
\end{equation*}
for any $1\leq p\leq +\infty,$
where $C$ only depends on the initial data.
\end{Proposition}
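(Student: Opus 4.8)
The plan is to collect the uniform-in-$k$ estimates established in Section 3 and in the lemmas of this subsection, extract a limit function $f$, and pass to the limit both in the weak formulation \eqref{weak formulation on k} and in the integrated energy identity \eqref{energy law with m=2}.

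First I would record that, by Proposition \ref{prop regularized equation}, for a fixed $\varepsilon>0$ the sequence $\{f_k\}$ given by \eqref{regularized solution} obeys the bounds \eqref{a priori estimate on k} uniformly in $k$. From these, a subsequence of $\{f_k\}$ converges weakly-$*$ in $L^\infty(0,T;L^p(\R^3\times\R^3))$ for every $p>1$ to some $f$, as in \eqref{weak convergence of f}; in particular $f\ge 0$, $f\in L^\infty(0,T;L^\infty)$, and $f$ inherits the $L^p$ bound, which is already the last assertion of the Proposition.

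Second, the crucial point is the passage to the limit in the nonlinear damping $\mu(\mathfrak{n}_{k-1})f_k$, for which weak convergence of $f_k$ alone does not suffice. Here I would invoke Lemma \ref{Lemma on strong convergence}, which upgrades the convergence of the moments to the \emph{strong} convergences $\mathfrak{n}_k\to\mathfrak{n}$ in $L^\infty(0,T;L^r_{loc}(\R^3))$ and $j_k\to j$ in $L^\infty(0,T;L^s_{loc}(\R^3))$ via the $L^p$ velocity-averaging regularity of DiPerna--Lions--Meyer applied to \eqref{regularized equation}, whose source terms are controlled by \eqref{Q1 in L^1}--\eqref{Q2 in L infty}. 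Since $\mu$ is Lipschitz, this gives $\mu(\mathfrak{n}_{k-1})\to\mu(\mathfrak{n})$ strongly, hence $\mu(\mathfrak{n}_{k-1})f_k\rightharpoonup\mu(\mathfrak{n})f$ as a strong $\times$ weak product, which is Lemma \ref{Lemma of Q1}. The linear operator $\mathfrak{Q}_2(f_k)$ converges weakly by \eqref{convergence of Q2} (using that $f_{k-1}$ and $f_k$ share the limit $f$ and the bound \eqref{probability function-2}), and the transport term passes to the limit directly from \eqref{weak convergence of f}. Inserting all of this into \eqref{weak formulation on k} produces \eqref{weak formulation k}, so $f$ is a weak solution of \eqref{regularized equation after k goes to large}.

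Third, for the energy inequality \eqref{energy law after k} I would start from the identity \eqref{energy law with m=2} valid for the smooth $f_k$. The term $\int(1+|\xi|^m)f_k\,d\xi\,dx$ is handled by Fatou together with the a.e.\ convergence \eqref{convergence m order}; the nonnegative damping integral $\int_0^t\int\mu(\mathfrak{n}_{k-1})(1+|\xi|^m)f_k$ is discarded. On the right-hand side the two terms $-\lambda\int_0^t\int(1+|\xi|^m)f_k$ and $+\lambda\int_0^t\int(1+|\xi|^m)f_{k-1}$ both converge to $\lambda\int_0^t\int(1+|\xi|^m)f$ — by \eqref{convergence m order} plus the uniform bound on a slightly higher moment coming from \eqref{kinetic energy}, which supplies the equi-integrability for a dominated/Vitali argument — so their difference tends to $0$, while the last term is independent of $k$. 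This yields \eqref{energy law after k}. The main obstacle throughout is precisely the loss of monotonicity produced by $\mu(\mathfrak{n}_{k-1})f_k$: one cannot run the Monotone Convergence argument of Leger--Vasseur, so everything hinges on the strong $L^p_{loc}$ convergence of $\mathfrak{n}_k$ delivered by velocity averaging, which in turn rests on the moment bounds \eqref{m order estimate} obtained from the Gronwall-type Lemma \ref{Gronwall inequality}.
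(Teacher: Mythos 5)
Your proposal is correct and follows essentially the same route as the paper: weak-$*$ compactness from the uniform bounds \eqref{a priori estimate on k}, strong $L^p_{loc}$ convergence of the moments via DiPerna--Lions--Meyer velocity averaging to handle $\mu(\mathfrak{n}_{k-1})f_k$ as a strong-times-weak product, weak convergence of $\mathfrak{Q}_2(f_k)$, and then Fatou applied to \eqref{energy law with m=2} with the cancellation of the two $\lambda$-terms for the energy inequality. Your treatment of that cancellation (via equi-integrability of the $m$-th moments) is in fact slightly more explicit than the paper's, but it is the same argument.
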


\bigskip

\subsection{Passing to the limits as $\varepsilon\to 0$.}
In this subsection, we use $\{f_{\varepsilon}\}_{\varepsilon>0}$ to denote a sequence of solutions to initial value problem \eqref{regularized equation after k goes to large} that constructed in Proposition \ref{prop k goes to large} for any $\varepsilon>0$. Here we aim to pass to the limits for recovering the weak solutions to \eqref{kinetic equation}-\eqref{initial data} as $\varepsilon$ goes to zero.

On one hand, by Proposition \ref{prop k goes to large}, these solutions have
 the following estimate
 \begin{equation}
\label{bounded in Lp for f}
\|f_{\varepsilon}\|_{L^{\infty}(0,T;L^p(\R^3\times\R^3))}\leq C,
\end{equation}
for any $1\leq p\leq +\infty,$
where $C$ only depends on the initial data.
This yields,
\begin{equation}
\label{weak convergence last level}
f_{\varepsilon}\rightharpoonup f\quad\text{ weakly in } L^{\infty}(0,T;L^p(\R^3\times\R^3))
\end{equation}
for any $1<p<\infty.$

\vskip0.3cm

On the other hand, the solutions obey the following energy inequality,
 \begin{equation}
\label{energy inequality last level-1}
\begin{split}
\int_{\R^3\times\R^3}(1+|\xi|^m)f_{\varepsilon}\,d\xi\,d x
\leq \int_{\R^3\times\R^3}(1+|\xi|^m)f^0_{\varepsilon}\,d\xi\,d x,
\end{split}
\end{equation}
for all $\varepsilon>0.$

By the definition of $f^0_{\varepsilon}$, thus
$$f_{\varepsilon}^0\to f^0\;\;\text{a.e. as } \varepsilon\to0,\text{ and } f_{\varepsilon}^0\to f^0 \;\;\text{ in } L^p(\R^3\times\R^3) $$
for any $p> 1.$ Let us to denote $\overline{g}(t,x,\xi)=g*\eta_{\varepsilon}(x),$
where $\{\eta_{\varepsilon}\}_{\varepsilon>0}$ is a suitable family of regularizing kernels
with respect to the space variable $x$.
We state the following lemma:
\begin{Lemma}
\label{smooth lemma}
Let $\xi$ be the third variable, then, for any function $h=h(\xi)$, we have
\begin{equation*}
\int_{\R^3} \overline{g} h(\xi)\,d\xi=\overline{\int_{\R^3}gh\,d\xi}.
\end{equation*}
\end{Lemma}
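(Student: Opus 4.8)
The plan is to unwind the definition of the spatial mollification and then exchange the order of integration, the whole content of the lemma being an instance of Fubini's theorem. By definition $\ov{g}(t,x,\xi)=(g*\eta_\varepsilon)(t,x,\xi)=\int_{\R^3} g(t,y,\xi)\,\eta_\varepsilon(x-y)\,dy$, where the convolution acts only on the space variable $x$, while $t$ and $\xi$ enter merely as parameters. Multiplying by $h(\xi)$ and integrating in $\xi$, I would first write
\begin{equation*}
\int_{\R^3}\ov{g}\,h(\xi)\,d\xi=\int_{\R^3}\left(\int_{\R^3} g(t,y,\xi)\,\eta_\varepsilon(x-y)\,dy\right)h(\xi)\,d\xi=\int_{\R^3}\int_{\R^3} g(t,y,\xi)\,h(\xi)\,\eta_\varepsilon(x-y)\,dy\,d\xi.
\end{equation*}

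Next I would check that this double integral is absolutely convergent, so that Fubini's theorem is legitimately applicable. The kernel $\eta_\varepsilon$ is a fixed smooth function with compact support, hence bounded; and for the weights $h$ that actually occur in the sequel (namely $h\equiv 1$, $h(\xi)=\xi_i$, or $h(\xi)=|\xi|^m$) the a priori bounds of Section 3 — in particular $f_\varepsilon\in L^\infty(0,T;L^1\cap L^\infty(\R^3\times\R^3))$ from Lemma \ref{Lemma bounds on fk} together with the moment estimate $\int|\xi|^m f_\varepsilon\,d\xi\,dx\le\lambda Ke^{Kt}$ from Lemma \ref{Lemma of energy law for regularized equation} — ensure that $g\,h\in L^1(\R^3\times\R^3)$ for a.e.\ $t$, so that $\int_{\R^3}\int_{\R^3}|g(t,y,\xi)|\,|h(\xi)|\,\eta_\varepsilon(x-y)\,dy\,d\xi<\infty$.

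Granting this, Fubini's theorem interchanges the $dy$ and $d\xi$ integrations,
\begin{equation*}
\int_{\R^3}\int_{\R^3} g(t,y,\xi)\,h(\xi)\,\eta_\varepsilon(x-y)\,dy\,d\xi=\int_{\R^3}\left(\int_{\R^3} g(t,y,\xi)\,h(\xi)\,d\xi\right)\eta_\varepsilon(x-y)\,dy,
\end{equation*}
and the right-hand side is precisely $\left(\big(\textstyle\int_{\R^3} g\,h\,d\xi\big)*\eta_\varepsilon\right)(t,x)=\ov{\int_{\R^3} g\,h\,d\xi}$, which is the claimed identity. There is no real obstacle here beyond this bookkeeping: the single delicate point is the integrability hypothesis underlying the use of Fubini, which is exactly why the lemma is invoked for $g=f_\varepsilon$ (or $g=f_k$) paired with weights $h$ controlled by the moment estimates above; for a general $h$ the identity holds verbatim whenever either side is well defined.
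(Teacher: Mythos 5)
Your proof is correct and follows essentially the same route as the paper: unwind the definition of the spatial mollification and interchange the $dy$ and $d\xi$ integrations by Fubini's theorem. The only difference is that you explicitly verify the absolute integrability needed to invoke Fubini, which the paper leaves implicit.
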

\begin{proof}
Let us to calculate the left hand side,
\begin{equation*}
\begin{split}&
\int_{\R^3}\overline{g}h(\xi)\,d\xi=\int_{\R^3}\int_{\R^3}\eta_{\varepsilon}(y-x) g(t, y,\xi)h(\xi)\,dy\,d\xi,
\end{split}
\end{equation*}
and the right hand side is as follows
\begin{equation*}
\begin{split}&
\overline{\int_{\R^3}g(t,x,\xi)h(\xi)\,d\xi}=\int_{\R^3}\eta_{\varepsilon}(y-x)\left(\int_{\R^3} g( y,\xi)h(\xi)\,d\xi\right)\,dy.
\end{split}
\end{equation*}
By the Fubini's theorem, we proved this lemma.
\end{proof}

We use $\overline{f_0}=f^0_{\varepsilon}$, by Lemma \ref{smooth lemma},
we find that
$$\int_{\R^3\times\R^3}(1+|\xi|^m)f^0_{\varepsilon}\,d\xi\,d x=\int_{\R^3}\overline{\int_{\R^3}(1+|\xi|^m)f^0\,d\xi}\,d x.$$
Note that, $$\int_{\R^3}(1+|\xi|^m)f^0\,d\xi$$ is uniformly bounded in $L^{1}(\R^3),$ thus, for any $R\in(0,\infty)$,
\begin{equation}
\label{limit before R}
\int_{B_R}\overline{\int_{\R^3}(1+|\xi|^m)f^0\,d\xi}\,d x
\to \int_{B_R}\int_{\R^3}(1+|\xi|^m)f^0\,d\xi\,d x,
\end{equation}
where $B_R$ is a ball with radius $R>0$ and center at zero in $\R^3.$ \eqref{limit before R} is true for any $R>0,$ thus Letting $R$ go to $+\infty$ in \eqref{limit before R},  we have
\begin{equation*}
\int_{\R^3}\overline{\int_{\R^3}(1+|\xi|^m)f^0\,d\xi}\,d x
\to \int_{\R^3}\int_{\R^3}(1+|\xi|^m)f^0\,d\xi\,d x,
\end{equation*}

and hence,
as $\varepsilon$ tends to zero,
\begin{equation}
\label{initial data convergence}
 \int_{\R^3\times\R^3}(1+|\xi|^m)f^0_{\varepsilon}\,d\xi\,dx\to
\int_{\R^3\times\R^3}(1+|\xi|^m)f^0\,d\xi\,dx
\end{equation}
as $\varepsilon\to 0$.
Using \eqref{initial data convergence}, we deduce
\begin{equation}
\label{energy inequality last level}
\begin{split}
\int_{\R^3\times\R^3}(1+|\xi|^m)f_{\varepsilon}\,d\xi\,d x\leq \int_{\R^3\times\R^3}(1+|\xi|^m)f^0\,d\xi\,d x<+\infty,
\end{split}
\end{equation}
for all $\varepsilon>0.$

Thanks to \eqref{energy inequality last level}, we employ the same argument as in last subsection, to have, for any $\varepsilon\to 0$,

\begin{equation}
\label{weak convergence of n in Lp-2}
\mathfrak{n}_{\varepsilon}\to\mathfrak{n} \quad\text{ strongly in } L^{\infty}(0,T;L_{loc}^{r}(\R^3)),
\end{equation}
where $\mathfrak{n}=\int_{\R^3}f\,d\xi;$
 \begin{equation}
\label{weak convergence of j in Lp-2}
j_{\varepsilon}\to j \quad\text{ strongly in } L^{\infty}(0,T;L^{s}_{loc}(\R^3)),
\end{equation}
where $j=\int_{\R^3}f\xi\,d \xi;$
and
\begin{equation}
\label{operator convergence-2}
\int_{\R^3}\mu(\mathfrak{n}_{\varepsilon})f_{\varepsilon}\,d\xi\to \int_{\R^3}\mu(\mathfrak{n})f\,d\xi
\end{equation} in $L^{\infty}(0,T;L^1_{loc}(\R^3)).$
From \eqref{weak convergence last level}, as $\varepsilon\to 0$, one obtains
\begin{equation}
\label{Q2 convergence last level}
\begin{split}
&\mathfrak{Q}_2(f_{\varepsilon})=-\lambda f_{\varepsilon}+\lambda\int T(\xi,\xi')f_{\varepsilon}(t,x,\xi')\,d\xi'\to
\\&\quad\quad\quad\quad\mathfrak{Q}_2(f)=-\lambda f+\lambda\int T(\xi,\xi')f(t,x,\xi')\,d\xi'
\end{split}
\end{equation}  weakly in $ L^{\infty}(0,T;L^p(\R^3\times\R^3))$
for any $p\geq1$.
Thus, we can pass to the limits for recovering the weak solutions to \eqref{kinetic equation}-\eqref{initial data} as $\varepsilon$ tends to zero. In fact, by \eqref{weak convergence last level}, \eqref{weak convergence of n in Lp-2}-\eqref{Q2 convergence last level}, taking the limits as $\varepsilon$ tends to zero, in the following weak formulation,
\begin{equation}
\begin{split}
\label{weak formulation k}
%\\& \text{For any test function } \varphi\in C^{\infty}(\R^3\times\R^3\times[0,T]), \;\;\text{ we have }\\&
&\int_{\R^3\times\R^3}f_{\varepsilon}^0\varphi(0,x,\xi)\,d\xi\,dx+\int_0^T\int_{\R^3\times\R^3}f_{\varepsilon}\varphi_t+\xi f_{\varepsilon}\cdot\nabla_x\varphi\,d\xi\,dx\,dt+
\\&\int_0^T\int_{\R^3\times\R^3}\mu(\mathfrak{n}_{\varepsilon})f_{\varepsilon}\varphi\,d\xi\,dx\,dt=-\lambda\int_0^T \int_{\R^3\times\R^3}f_{\varepsilon}\varphi\,d\xi\,dx\,dt
\\&\quad\quad\quad\quad\quad\quad\quad\quad\quad+\int_0^T\int_{\R^3\times\R^3}\left(\lambda\int T(\xi,\xi')f_{\varepsilon}(t,x,\xi')\,d\xi'\right)\,\varphi\,d\xi\,dx\,dt,
\end{split}
\end{equation}
where $$\mathfrak{n}_{\varepsilon}=\int_{\R^3}f_{\varepsilon}\,d\xi;$$
then
\begin{equation}
\begin{split}
\label{weak formulation}
%\\& \text{For any test function } \varphi\in C^{\infty}(\R^3\times\R^3\times[0,T]), \;\;\text{ we have }\\&
&\int_{\R^3\times\R^3}f^0\varphi(0,x,\xi)\,d\xi\,dx+\int_0^T\int_{\R^3\times\R^3}f\varphi_t+\xi f\cdot\nabla_x\varphi\,d\xi\,dx\,dt+
\\&\int_0^T\int_{\R^3\times\R^3}\mu(\mathfrak{n})f\varphi\,d\xi\,dx\,dt=-\lambda\int_0^T\int_{\R^3\times\R^3}f\varphi\,d\xi\,dx\,dt
\\&\quad\quad\quad\quad\quad\quad\quad\quad\quad+\lambda\int_0^T\int_{\R^3\times\R^3}\left(\int_{\R^3}T(\xi,\xi')f(t,x,\xi')\,d\xi'\right)\varphi\,d\xi\,dx\,dt,
\end{split}
\end{equation}
where $$\mathfrak{n}=\int_{\R^3}f\,d\xi.$$

\vskip0.3cm

Moreover, same to last subsection, letting $\varepsilon\to0$ in \eqref{energy inequality last level}, we have
\begin{equation*}
\label{energy inequality after k going to infinity}
\begin{split}
&\int_{\R^3\times\R^3}(1+|\xi|^m)f\,d\xi\,dx\leq \int_{\R^3\times\R^3}(1+|\xi|^m)f^0\,d\xi\,dx.
\end{split}
\end{equation*}
Thus, we proved Theorem \ref{main result}.

\section{Energy conservation}

In this section, we will prove our second main result on the energy conservation.
 We can use the following quantities $$\Phi(t,x,\xi)=\eta_{\varepsilon}(y-x)\phi(\xi)$$
as a test function in \eqref{weak formulation in definition}, where $\{\eta_{\varepsilon}\}_{\varepsilon>0}$ is a suitable family of regularizing kernels
with respect to the space variable $x$.
Using $\overline{g}=g*\eta_{\varepsilon}$, we deduce
$$\overline{f}_t+\overline{\xi\cdot\nabla f}+\overline{\mu(\mathfrak{n})f}=\overline{-\lambda f}+\overline{\lambda\int_{\R^3}T(\xi,\xi')f(t,x,\,\xi')\,d\xi^{'}}.$$
We use $1+|\xi|^m$ to multiply the above equality, then
 \begin{equation*}
 \begin{split}&
\int_{\R^3\times\R^3} \overline{\left(f_t+\xi\cdot\nabla_x f+\mu(\mathfrak{n})f\right)}(1+|\xi|^m)\,dx\,d\xi
\\&=\int_{\R^3\times\R^3}\overline{\left(-\lambda f+\lambda\int_{\R^3}T(\xi,\xi')f(t,x,\,\xi')\,d\xi^{'}\right)}(1+|\xi|^m)\,dx\,d\xi,
 \end{split}
 \end{equation*}
 which in turn gives us
  \begin{equation}
 \label{weak formulation for energy conservation}
 \begin{split}&
\int_{\R^3\times\R^3} \overline{f}(1+|\xi|^m)\,dx\,d\xi+\int_0^t\int_{\R^3\times\R^3} \overline{\mu(\mathfrak{n})f}(1+|\xi|^m)\,dx\,d\xi\,dt
\\&=\int_0^t\int_{\R^3\times\R^3}\overline{\left(-\lambda f+\lambda\int_{\R^3}T(\xi,\xi')f(t,x,\,\xi')\,d\xi^{'}\right)}(1+|\xi|^m)\,dx\,d\xi\,dt
\\&+
\int_{\R^3\times\R^3} \overline{f}_0(1+|\xi|^m)\,dx\,d\xi.
 \end{split}
 \end{equation}

Thanks to Lemma \ref{smooth lemma}, the first term on the right hand side of \eqref{weak formulation for energy conservation} is zero, in particular,
 \begin{equation*}
 \begin{split}
 &\int_0^T\int_{\R^3\times\R^3}\overline{\left(-\lambda f+\lambda\int_{\R^3}T(\xi,\xi')f(t,x,\,\xi')\,d\xi^{'}\right)}(1+|\xi|^m)\,d\xi\,dx\,dt
\\& =-\lambda \int_0^T\int_{\R^3} \overline{\int_{\R^3}f(t,x,\,\xi)(1+|\xi|^m)\,d\xi}\,dx\,dt\\&+\int_0^T\int_{\R^3}\overline{\int_{\R^3}\left(\lambda\int_{\R^3}T(\xi,\xi')f(t,x,\,\xi')(1+|\xi|^m)\,d\xi^{'}\right)\,d\xi}\;\;\,dx\,dt
\\&=0,
 \end{split}
 \end{equation*}
 and hence, \eqref{weak formulation for energy conservation} reduces to the following one
 \begin{equation}
 \label{reduce form for energy conservation}
 \begin{split}&
\int_{\R^3\times\R^3} \overline{f}(1+|\xi|^m)\,dx\,d\xi+\int_0^t\int_{\R^3\times\R^3} \overline{\mu(\mathfrak{n})f}(1+|\xi|^m)\,dx\,d\xi\,dt
\\&=
\int_{\R^3\times\R^3} \overline{f}_0(1+|\xi|^m)\,dx\,d\xi.
 \end{split}
 \end{equation}
\vskip0.3cm

Meanwhile,  Lemma \ref{smooth lemma} gives us
\begin{equation}
\label{smooth m order}
\int_{\R^3\times\R^3} \overline{f}(1+|\xi|^m)\,d\xi\,dx=\int_{\R^3}\overline{\int_{\R^3}f(1+|\xi|^m)\,d\xi}\,dx
\end{equation}
To handle the estimate on $\int_{\R^3}f(1+|\xi|^m)\,d\xi$, we need additional condition and the the following lemma.
 An argument similar to that given above
in the proof to Lemma \ref{Lemma of n-j} shows that,

\begin{Lemma}
\label{Lemma of m order}For any $p\geq 1$, $0\leq t\leq T$, if $f$ is bounded in $L^{\infty}(\R^3\times\R^3\times[0,T]),$ we have
$$\|\int_{\R^3}|\xi|^mf\,d\xi\|_{L^{\infty}(0,T;L^{\frac{3+p}{3+m}}(\R^3))} \leq C_{T}(\|f\|_{L^{\infty}}+1)\left(\int_{\R^3\times\R^3}|\xi|^pf\,d\xi\,dx\right)^{\frac{m+3}{3+p}}.$$
\end{Lemma}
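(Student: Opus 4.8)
The plan is to repeat the splitting argument used in the proof of Lemma~\ref{Lemma of n-j}, now carrying the weight $|\xi|^m$ instead of $1$ or $|\xi|$; throughout we use $f\geq 0$ and $f\in L^\infty$. Fix $t$ and $x$, and for $R>0$ write
\begin{equation*}
\int_{\R^3}|\xi|^m f\,d\xi=\int_{|\xi|\leq R}|\xi|^m f\,d\xi+\int_{|\xi|\geq R}|\xi|^m f\,d\xi .
\end{equation*}
The inner part is bounded by $\|f\|_{L^\infty}\int_{|\xi|\leq R}|\xi|^m\,d\xi=C_m\|f\|_{L^\infty}R^{m+3}$, since in three dimensions $\int_{|\xi|\leq R}|\xi|^m\,d\xi$ equals a fixed constant times $R^{m+3}$. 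For the outer part, restricting to the relevant range $p\geq m$ (this is also the only range in which $\frac{3+p}{3+m}$ is an admissible Lebesgue exponent), we use $|\xi|^m=|\xi|^{m-p}|\xi|^p\leq R^{m-p}|\xi|^p$ on $\{|\xi|\geq R\}$, hence
\begin{equation*}
\int_{|\xi|\geq R}|\xi|^m f\,d\xi\leq R^{m-p}\int_{\R^3}|\xi|^p f\,d\xi .
\end{equation*}

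Next I would choose $R$ so as to balance the two contributions, exactly as in Lemma~\ref{Lemma of n-j}, namely
\begin{equation*}
R=\left(\int_{\R^3}|\xi|^p f\,d\xi\right)^{\frac{1}{3+p}},
\end{equation*}
which turns both terms into a constant multiple of $\big(\int_{\R^3}|\xi|^p f\,d\xi\big)^{\frac{m+3}{3+p}}$. This gives the pointwise-in-$x$ bound
\begin{equation*}
\int_{\R^3}|\xi|^m f\,d\xi\leq C_m(\|f\|_{L^\infty}+1)\left(\int_{\R^3}|\xi|^p f\,d\xi\right)^{\frac{m+3}{3+p}} .
\end{equation*}

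Finally I would take the $L^{\frac{3+p}{3+m}}(\R^3_x)$ norm. Raising both sides to the power $q=\frac{3+p}{3+m}$, the exponent on the right becomes $q\cdot\frac{m+3}{3+p}=1$, so integrating in $x$ leaves exactly $C_m^{\,q}(\|f\|_{L^\infty}+1)^{q}\int_{\R^3\times\R^3}|\xi|^p f\,d\xi\,dx$ on the right-hand side; taking the $q$-th root yields the claimed inequality, and an ess-sup over $t\in[0,T]$ then produces the $L^\infty(0,T;\cdot)$ norm together with the constant $C_T$. I do not expect any real obstacle: the computation is a verbatim adaptation of Hamdache's moment estimate already used in Lemma~\ref{Lemma of n-j}, and the only point needing a word of care is the hypothesis $p\geq m$, which is automatic in the application of Section~5, where the lemma is used with $p=3+m$ and $\frac{3+p}{3+m}=\frac{6+m}{3+m}>1$.
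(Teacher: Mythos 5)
Your proof is correct and is essentially the paper's own argument: the paper gives no separate proof of this lemma but simply invokes the splitting-and-optimization of the radius $R$ from Lemma \ref{Lemma of n-j} (Hamdache's moment estimate), which is exactly what you carry out, including the correct choice $R=\left(\int_{\R^3}|\xi|^p f\,d\xi\right)^{\frac{1}{3+p}}$. Your remark that the restriction $p\geq m$ is needed (and is satisfied in the application with $p=3+m$) is a point the paper leaves implicit, so it is a worthwhile addition rather than a deviation.
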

Thus, under the additional condition \eqref{addition condition on m order},  Lemma \ref{Lemma of m order} gives us
\begin{equation*}
\int_{\R^3}(1+|\xi|^m)f\,d\xi
\end{equation*} is uniformly bounded in $L^{\infty}(0,T;L^\alpha(\R^3))$ for some $\alpha>1$.
This, with the help of \eqref{smooth m order},
\begin{equation*}
\int_{\R^3\times\R^3} \overline{f}(1+|\xi|^m)\,dx\,d\xi\to
\int_{\R^3\times\R^3} f(1+|\xi|^m)\,dx\,d\xi,
\end{equation*}
where we adopted the same argument of showing \eqref{limit before R} and \eqref{initial data convergence}.
Similarly, we obtain
\begin{equation*}
\int_{\R^3\times\R^3} \overline{f}_0(1+|\xi|^m)\,dx\,d\xi\to
\int_{\R^3\times\R^3} f_0(1+|\xi|^m)\,dx\,d\xi.
\end{equation*}

Applying Lemma \ref{smooth lemma}, one obtains
\begin{equation}
\label{nonlinear term smooth}
\begin{split}
\int_{\R^3\times\R^3} \overline{\mu(\mathfrak{n})f}(1+|\xi|^m)\,d\xi\,dx
&=\int_{\R^3}\overline{\int_{\R^3}\mu(\mathfrak{n})f(1+|\xi|^m)\,d\xi}\,dx.
\end{split}
\end{equation}
By Lemma \ref{Lemma of n-j} and Lemma \ref{Lemma of m order}, and the additional condition  \eqref{addition condition on m order},  we are able to give a uniform bound on $$\int_{\R^3}\mu(\mathfrak{n})f(1+|\xi|^m)\,d\xi$$
in $L^{\infty}(0,T;L^\alpha(\R^3))$ for some $\alpha\geq 1.$
And hence, using the same argument of showing \eqref{limit before R} and \eqref{initial data convergence} again, as $\varepsilon$ goes to zero,
$$\int_0^t\int_{\R^3\times\R^3} \overline{\mu(\mathfrak{n})f}(1+|\xi|^m)\,dx\,d\xi\,dt\to \int_0^t\int_{\R^3\times\R^3}\mu(\mathfrak{n}) f(1+|\xi|^m)\,dx\,d\xi\,dt.$$

\vskip0.3cm

Finally,
  letting $\varepsilon$ go to zero in \eqref{reduce form for energy conservation},  we obtain
\begin{equation*}
 \begin{split}&
\int_{\R^3\times\R^3}f(1+|\xi|^m)\,dx\,d\xi+\int_0^t\int_{\R^3\times\R^3} \mu(\mathfrak{n})f(1+|\xi|^m)\,dx\,d\xi\,dt
\\&=
\int_{\R^3\times\R^3} f_0(1+|\xi|^m)\,dx\,d\xi.
 \end{split}
 \end{equation*}
\section*{Acknowledgments}

The author thanks Professor Ming Chen at University of Pittsburgh and Professor Alexis Vasseur at the University of Texas at Austin for assistance with their comments and suggestions respectally, that greatly improved the manuscript. The author also thanks Dr. Lei Yao at Northwest University in China for his reading the early version of this manuscript.

\bigskip\bigskip

\end{document}